\numberwithin{equation}{section}
\theoremstyle{plain} 
\newtheorem{thm}{Theorem}[section] 
\newtheorem{lem}[thm]{Lemma} 
\newtheorem{prop}[thm]{Proposition} 
\newtheorem{dfn}[thm]{Definition}
\definecolor{custom-blue}{RGB}{0,99,166} 
\begin{document}

\author{$\text{\sc{Antonio Giuseppe Grimaldi}}^\clubsuit$ \sc{and} $\text{\sc{Stefania Russo}}^\spadesuit$}


\title{{Regularity results for minimizers of non-autonomous integral functionals}}

\maketitle
\maketitle

\begin{abstract}
We establish the higher fractional differentiability for the minimizers of non-autonomous integral functionals of the form
\begin{equation}
 \mathcal{F}(u,\Omega):=\int_\Omega \left[ f(x,Du)- g \cdot u \right]  dx , \notag
\end{equation}
under $(p,q)$-growth conditions. Besides a suitable differentiability assumption on the partial map $x \mapsto D_\xi f(x,\xi)$, we do not need to assume any differentiability assumption on the function $g$. Moreover, we show that the higher differentiability result holds true also
assuming strict convexity and growth conditions on $f$ only at infinity.

\end{abstract}

\medskip
\noindent \textbf{Keywords:} {Higher differentiability; Non-standard growth; Asymptotic convexity. }
\medskip \\
\medskip
\noindent \textbf{MSC 2020:} {49N60; 35J70; 35J87.}

\let\thefootnote\relax\footnotetext{
			\small $^{\clubsuit}$Dipartimento di Ingegneria, Università degli Studi di Napoli ``Parthenope'',
Centro Direzionale Isola C4, 80143 Napoli, Italy. E-mail: \textit{antoniogiuseppe.grimaldi@collaboratore.uniparthenope.it}}

\let\thefootnote\relax\footnotetext{
			\small $^{\spadesuit}$Dipartimento di Matematica e Applicazioni ``R. Caccioppoli'', Università degli Studi di Napoli ``Federico II'', Via Cintia, 80126 Napoli,
 Italy. E-mail: \textit{stefania.russo3@unina.it}}

\section{Introduction}
The aim of this paper is to present some regularity results of vectorial minimizers of variational integrals of the form
\begin{equation}\label{functional}
 \mathcal{F}(u,\Omega):=\int_\Omega \left[ f(x,Du)- g \cdot u \right]  dx .
\end{equation}
Here, $\Omega \subset \mathbb{R}^n$ is a bounded open subset, $n\geq 2$, $u : \Omega \to \mathbb{R}^N$, $N \ge 1$,
$f:\Omega\times\mathbb{R}^{N\times n}\to [0,+\infty)$, $\xi \mapsto f(x,\xi) \in \mathcal{C}^1(\mathbb{R}^{N\times n})$  and satisfies the so-called \textit{Uhlenbeck structure}, i.e.\ $f$ is represented in
the form $f (x, \xi) = \tilde{f}(x, |\xi|)$ for a given function $\tilde{f} :\Omega\times[0,+\infty)\to [0,+\infty)$. 
Besides, we assume that there exist positive constants $\nu, L, M, l$ and exponents $2 \leq p < q < \infty$ such that

\begin{equation}\label{F1}
  \nu|\xi|^p \leq f(x,\xi) \leq L(1+|\xi|^q)  \tag{F1}
\end{equation}
\begin{equation}\label{F2}
    \langle D_\xi f(x, \xi)-D_\xi f(x, \eta),\xi-\eta\rangle  \ge M (1 +|\xi|^2+|\eta|^2)^\frac{p-2}{2}|\xi - \eta|^2 \tag{F2}
\end{equation}
\begin{equation}\label{F3}
     |D_\xi f(x, \xi)-D_\xi f(x, \eta)| \le l (1 +|\xi|^2+|\eta|^2)^\frac{q-2}{2}|\xi - \eta| \tag{F3}
\end{equation}

\noindent for a.e.\ $x \in \Omega$ and all $\xi,\eta \in \mathbb{R}^{N \times n}$.
Concerning the dependence on the $x$-variable, we assume that
there exists a non-negative function 
$k: \Omega \to [0, +\infty)$ such that
\begin{equation}\label{F4}
    |D_\xi f(x,\xi)-D_\xi f(y, \xi)| \leq |x-y|^{\gamma} (k(x)+k(y)) (1 +|\xi|^2)^\frac{q-1}{2} \tag{F4}
\end{equation}
\noindent for a.e.\ $x,y \in \Omega$ and every $\xi \in \mathbb{R}^{N \times n}$, where $\gamma \in (0,1)$.

It is worth noting that assumption \eqref{F4} with $k \in L^r_{loc}(\Omega)$ states that the partial map $x \mapsto D_\xi f(x,\xi)$ belongs to the Besov space $B_{r,\infty}^{\gamma}(\Omega, \mathbb{R}^{N \times n})$ (for precise definition and properties of Besov spaces see Section \ref{secbesov}).
\\On the other hand, we say that assumption \eqref{eqf5} is satisfied if there exists a sequence of measurable non-negative functions $g_k \in L^{r}_{loc}(\Omega)$ such that
$$\displaystyle\sum_{k=1}^{\infty} \Vert g_k \Vert^{s}_{L^{r}(\Omega)} < \infty,$$
for some exponent $s \ge 1$, and at the same time
\begin{equation}\tag{F5}
    |D_{\xi}f(x,\xi)-D_{\xi}f(y, \xi)| \leq |x-y|^{\gamma} (g_k(x)+g_k(y)) (1 +|\xi|^2)^{\frac{q-1}{2}} \label{eqf5}
\end{equation}

\noindent for a.e.\ $x,y \in \Omega$ such that $2^{-k} \text{diam}(\Omega) \leq |x-y| < 2^{-k+1}\text{diam}(\Omega)$ and for every $\xi \in \mathbb{R}^{N \times n}$. As before, we remark that assumption \eqref{eqf5} implies that the partial map $x \mapsto D_\xi f(x,\xi)$ belongs to the Besov space $B^\gamma_{r,s}(\Omega, \mathbb{R}^{N \times n})$.

Let us recall the definition of local minimizer.
\begin{dfn}
{\em A function $u\in W_{\rm loc}^{1,1}(\Omega, \mathbb{R}^N)$ is a  local minimizer of
\eqref{functional} if, for every open subset $\tilde{\Omega} \Subset \Omega$, we have $\mathcal{F}(u, \tilde{\Omega}) <  \infty$ and  
$\mathcal{F}(u;\tilde{\Omega})\le  \mathcal{F}(\varphi;\tilde{\Omega})$ holds
for all $\varphi\in u+W_0^{1,1}(\tilde{\Omega},\mathbb{R}^N)$.}
\end{dfn}
We will say that a function $F$ has \textit{$(p, q)$-growth conditions} if assumption \eqref{F1} is in force. 
The study of regularity properties of local minima to functionals with general growth started with the pioneering papers by Marcellini \cite{mar}, see also \cite{mar91, mar93}. 

When referring to $(p, q)$-growth conditions \eqref{F1},
we call the quantity $q/p > 1$ \textit{the gap ratio of the integrand} $F$, or simply, \textit{the gap}.
A main point is that in order to get regularity of minimizers to non-autonomous  functionals with non-standard growth conditions, even the local boundedness, a restriction between $p$ and $q$ need to be imposed, usually expressed in the form 
\begin{equation*}
    q \le c(n,r)p, \qquad \text{with } c(n,r ) \to 1 \ \text{as} \ r \to n,
\end{equation*}
where here $r$ is the exponent appearing in \eqref{F4}-\eqref{eqf5} and usually denotes the degree of regularity of the map $x \mapsto D_\xi f(x, \xi)$.
We refer to \cite{giaquinta,mar91} for counterexamples.

In order to explain the difficulties of our problem, we now recall some class of functionals included in our setting and some known regularity properties of the related minimizers.


It is well known that no extra differentiability properties for minimizers to 
\begin{equation}
 \mathcal{G}(u,\Omega):=\int_\Omega  f(x,Du) \ dx \notag
\end{equation}
can be expected, unless some assumption is given on the coefficients of the operator $D_\xi f(x,\xi)$. A $W^{1,r}$ Sobolev regularity, with $r \ge n$, on the partial map $x \mapsto D_\xi f(x,\xi)$ is a sufficient condition for the higher differentiability of minima (see \cite{elpdn,elpdn2,gavioli1,gavioli2,gentile,gi,gpdn,grimaldi,pdnacv,pdnpa}). In the case of standard $p$-growth, the higher fractional differentiability of minimizers has been proved  assuming that the coefficients of $ D_\xi f(x,\xi)$ belong to the Besov space $B^\alpha_{\frac{n}{\alpha},s}$ (see \cite{baison.clop2017,baison,clop,cruz,kristensen.mingione}). Regarding the non-standard growth, in \cite{grimaldi.ipocoana,grimaldi.ipocoana1} this analysis has been carried out in the setting of obstacle problems, assuming that the map $x \mapsto D_\xi f(x,\xi)$ satisfies a $B^\alpha_{r,s}$-regularity for $r > \frac{n}{\alpha}$ (see \cite{gripo} for the unconstrained case). It is worth noticing that solutions $u$ to obstacle problems solve elliptic equations of the form
\begin{equation}
    \text{div} D_\xi f(x,Du) = \text{div} D_\xi f(x,D \psi)=: \bar{g},
\end{equation}
where $\psi$ is the obstacle function and $u \ge \psi$. In \cite{grimaldi.ipocoana,grimaldi.ipocoana1}, the higher differentiability has been obtained assuming that $D \psi \in W^{1,2q-p}(\Omega)$, that implies $\bar{g} \in L^{2q-p}(\Omega)$. Here, in Theorem \ref{mainthm} below, we will prove some higher differentiability properties of minimizers under weaker assumptions on the right-hand side. 

Actually, the research in this field is so intense that it is almost impossible to give an exhaustive and comprehensive list of references; in addition to those mentioned in this paper and the references therein, we confine ourselves to refer the interested reader to the survey  \cite{MingSurvey}. \\
In particular in \cite{cgpdn}, the authors proved the extra fractional differentiability of weak solutions of the following nonlinear elliptic equations in divergence form
$$\text{div} \mathcal{A}(x,Du)= \text{div } G \quad \text{in } \Omega,$$
where the operator $\mathcal{A}$ satisfies \eqref{F2}--\eqref{F4} with $p=q$. It has been proved that a higher differentiability on the partial map $x \mapsto \mathcal{A}(x, \cdot) $ and on the right-hand side $G$ transfer to the function $V_p(Du)$ (see Section \ref{Preliminary} for the definition).

{The main novelty of our results is that we consider $(p,q)$-growth, which means we are able to extend the results of \cite{cgpdn} to the context of non-standard growth. Moreover, we show that the gradient of minimizers to \eqref{functional} posseses some fractional differentiability properties, without assuming any differentiability properties on the function $g$, but only  a suitable order of integrability in the setting of Lebesgue spaces.

Our first result is the following
\begin{thm}\label{mainthm}
Let $f$ satisfy \eqref{F1}--\eqref{F4} and let  $g \in L^{m}_{loc}(\Omega, \mathbb{R}^N)$, with $p' \leq m \leq 2$, for exponents $2\leq p  <\frac{n}{\lambda}< r$, $p<q$ such that
\begin{equation}\label{gap.}
\dfrac{q}{p} <  1+ \dfrac{ \lambda}{n}- \frac{1}{r} ,
\end{equation}
where $\lambda:= \min \{\gamma, \frac{m}{2} \}$.
Let $u\in W^{1,p}_{loc}(\Omega, \mathbb{R}^N)$ be a local minimizer of \eqref{functional}. Then, $V_p(Du)  \in B^\lambda_{2,\infty,\textrm{\text{loc}}}(\Omega, \mathbb{R}^{N \times n})$
and the following estimate holds
\begin{align}
   \int_{B_{R/4}} |\tau_h V_p (Du)|^2 dx \leq  c |h|^{2 \lambda} \left( \int_{B_{R}} (1 + |Du|^p) dx + \Vert k \Vert_{L^r (B_{R})} +  \Vert g \Vert_{L^{m}(B_R)} \right)^\sigma,
\end{align}
for every concentric balls $B_{R/4} \subset B_{R} \Subset \Omega$, where $c = c(n, p,q,m, L,l,  \nu,r,  R)$ and $\sigma= \sigma(n,p,q,m, \gamma,r)$ are positive constants.
\end{thm}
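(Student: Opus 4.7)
The plan is to follow the standard difference-quotient scheme for $(p,q)$-growth functionals. Because of the gap $q>p$, a direct test on the Euler--Lagrange system for $u$ is not admissible, so I would first introduce the regularized integrand
\[
f_{\varepsilon}(x,\xi) := f(x,\xi) + \varepsilon\,(1+|\xi|^{2})^{q/2},
\]
and, on each ball $B_{R}\Subset\Omega$, consider the minimizer $u_{\varepsilon}$ of $\int_{B_{R}}[f_{\varepsilon}(x,Dw)-g\cdot w]\,dx$ with boundary datum $u$. The standard theory for functionals of $q$-growth provides $u_{\varepsilon}\in W^{1,q}(B_{R})$, a uniformly elliptic Euler--Lagrange equation, and the convergence $u_{\varepsilon}\to u$ in $W^{1,p}(B_{R})$ as $\varepsilon\to 0$.

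Next, for a cutoff $\eta\in C^{\infty}_{c}(B_{R/2})$ and $|h|$ small, I would test the Euler--Lagrange equation of $u_{\varepsilon}$ with $\varphi=\tau_{-h}(\eta^{2}\tau_{h}u_{\varepsilon})$. The discrete integration by parts transfers $\tau_{-h}$ to $D_{\xi}f_{\varepsilon}$ on the left-hand side, producing
\[
\int \tau_{h}\bigl[D_{\xi}f_{\varepsilon}(\cdot,Du_{\varepsilon})\bigr]\cdot D\bigl(\eta^{2}\tau_{h}u_{\varepsilon}\bigr)\,dx \;=\; \int g\cdot\tau_{-h}(\eta^{2}\tau_{h}u_{\varepsilon})\,dx.
\]
Splitting the left-hand side into its ``$\xi$-difference'' and ``$x$-difference'' parts, the monotonicity \eqref{F2} applied to the former produces the quantity $\int \eta^{2}|\tau_{h}V_{p}(Du_{\varepsilon})|^{2}\,dx$, while \eqref{F4} applied to the latter generates an error of the form $|h|^{\gamma}(k(x)+k(x+h))(1+|Du_{\varepsilon}|^{2})^{(q-1)/2}\,|D(\eta^{2}\tau_{h}u_{\varepsilon})|$. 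For the $g$-term, since $g$ is merely integrable I cannot differentiate it: I would use Hölder with exponents $(m,m')$, observing that $m\ge p'$ forces $m'\le p$ and hence $\|\tau_{h}u_{\varepsilon}\|_{L^{m'}}\le c\,|h|\,\|Du_{\varepsilon}\|_{L^{p}}$ on a slightly larger ball. Interpolating between $L^{2}$ and $L^{p}$ then accounts for the $|h|^{m/2}$ contribution, which combined with the $|h|^{\gamma}$ coming from \eqref{F4} explains the exponent $\lambda=\min\{\gamma,m/2\}$.

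The main obstacle, in my view, is the reabsorption of the supercritical $q$-order terms produced by the $(q-1)/2$ growth in \eqref{F3}--\eqref{F4}. After expanding $D(\eta^{2}\tau_{h}u_{\varepsilon})$ and applying Young's inequality judiciously, one is left with integrals of $(1+|Du_{\varepsilon}|)^{q}$ weighted by $|h|^{2\gamma}$ times a power of $\|k\|_{L^{r}}$. To close the estimate I would rely on (i) the Sobolev embedding for the fractional quantity controlled by $\tau_{h}V_{p}(Du_{\varepsilon})$ to gain integrability beyond $L^{p}$, (ii) Hölder with exponent $r$ against the $k$-weight and, if necessary, (iii) an iteration of the resulting inequality. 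The gap condition $q/p<1+\lambda/n-1/r$ is precisely the threshold for which the extra Sobolev integrability of $V_{p}(Du_{\varepsilon})$ compensates the combined loss carried by $q/p$ and the integrability $1/r$ of $k$.

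Once a uniform-in-$\varepsilon$ estimate of the form $\int_{B_{R/4}}|\tau_{h}V_{p}(Du_{\varepsilon})|^{2}\,dx \le c\,|h|^{2\lambda}(\cdots)^{\sigma}$ is obtained, the passage to the limit via $W^{1,p}$-convergence of $u_{\varepsilon}$ and lower semicontinuity of the Besov seminorm (or Fatou's lemma) transfers the estimate to $u$, yielding $V_{p}(Du)\in B^{\lambda}_{2,\infty,\mathrm{loc}}(\Omega)$ with the announced quantitative bound.
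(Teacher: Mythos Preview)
Your difference-quotient core is essentially the paper's a priori estimate (Theorem~\ref{AppThm}): test with $\tau_{-h}(\eta^{2}\tau_{h}u)$, split into $\xi$- and $x$-differences, estimate via \eqref{F2}--\eqref{F4}, interpolate the supercritical $(2q-p)$-order terms between $L^{p}$ and $L^{np/(n-2\beta)}$, and close by the iteration Lemma~\ref{lm2}. The handling of the $g$-term and the emergence of $\lambda=\min\{\gamma,m/2\}$ are also in line with the paper.

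The genuine gap is the regularization step. Adding $\varepsilon(1+|\xi|^{2})^{q/2}$ forces $q$-coercivity, so the Dirichlet problem for $f_{\varepsilon}$ with boundary datum $u$ is well-posed only if some competitor in $u+W^{1,p}_{0}(B_{R})$ has finite $q$-energy; in particular the energy comparison $\int f_{\varepsilon}(x,Du_{\varepsilon})\le\int f_{\varepsilon}(x,Du)$, on which both the uniform $W^{1,p}$-bound and the identification of the limit rest, requires $Du\in L^{q}(B_{R})$. That is \emph{not} known a priori---it is essentially what one is trying to prove---so neither ``$u_{\varepsilon}\in W^{1,q}(B_{R})$'' nor ``$u_{\varepsilon}\to u$ in $W^{1,p}$'' follows from standard theory. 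This is the Lavrentiev-type obstruction that $(p,q)$-problems must confront. The paper avoids it by approximating $f$ from \emph{below} (Lemma~\ref{apprlem1}) with $p$-growth integrands $f_{j}\le f$: then $u\in W^{1,p}$ is always an admissible competitor and the minimizers $u_{j}$ automatically satisfy $\int|Du_{j}|^{p}\le c\int(1+f(x,Du)-g\cdot u)\,dx+\cdots$ uniformly in $j$. A second mollification in $x$ makes $k_{\varepsilon}\in L^{\infty}$, so Theorem~\ref{ReThm} furnishes the qualitative Besov regularity of $u_{j}^{\varepsilon}$ needed to run the a priori estimate with constants independent of $j,\varepsilon$; one then lets $\varepsilon\to 0$ (strong $W^{1,p}$-convergence via \eqref{SCov}) and $j\to\infty$ (monotone convergence of $f_{j}$, lower semicontinuity, and strict convexity of $f$ to identify the limit with $u$). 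Your scheme could be salvaged by additionally mollifying the boundary datum, but as written the approximation layer is incomplete.
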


One of the main motivations to the study of functionals with $(p,q)$-growth comes from the applications, for instance to the theory of elasticity for strongly anisotropic materials (see Zhikov \cite{Z1,Z2}). A very well known model is given by the so-called \textit{double phase functional} defined by
\begin{equation}
   \int_\Omega \left[ |Du|^p+a(x)|Du|^q \right] dx, \label{DP}
\end{equation}
where $a(x)$ is non-negative and H\"older continuous with exponent $\alpha$. The regularity properties of local minimizers to such functional have been widely investigated in \cite{BCM,25dfm,FMM} with the aim of identifying sufficient and necessary conditions on the relation between $p$, $q$ and $\alpha$ to establish the H\"older continuity of the gradient of the local minimizers. 

Very recently, in \cite{Inventiones,DFM} it has been proved the local gradient H\"older continuity of minimizers of non-uniformly elliptic integrals, that
are not necessarily equipped with a Euler-Lagrange equation due to the mere local H\"older continuity of coefficients, under the gap 
\begin{equation}
    \dfrac{q}{p} < 1 + \dfrac{\alpha}{n}. \label{GAP.}
\end{equation}
We observe that in the model case \eqref{DP}, the inequality \eqref{gap.}
gives back \eqref{GAP.}. Indeed, by Lemma \ref{EmB} below, if $a(x) \in B^\lambda_{r,\infty}(\Omega)$ then $a(x) \in \mathcal{C}^{0,\alpha}(\Omega)$
with exponent
\begin{equation*}
 \alpha=\lambda-\dfrac{n}{r}.
\end{equation*}
Moreover, in \cite{Inventiones} the authors proved the H\"older continuity of $Du$ assuming that $g \in L^{n/\alpha,1/2}(\Omega)$. However, in Theorem \ref{mainthm} we impose a weaker summability on the function $g$, that is $g \in L^{m}_{loc}(\Omega, \mathbb{R}^N)$ for $p' \leq m \leq 2$, and so our results are not covered by those in \cite{Inventiones}.

Now assuming \eqref{eqf5} instead of \eqref{F4} and arguing in a similary way, we obtain the following theorem.
\begin{thm}\label{thmBfinito}
Let $f$ satisfy \eqref{F1}--\eqref{F3}, \eqref{eqf5} and let  $g \in L^{m}_{loc}(\Omega, \mathbb{R}^N)$, with $p' \leq m \leq 2$, for exponents $2\leq p  <\frac{n}{\lambda}< r$, $p<q$ such that
\eqref{gap.} holds.
Let $u\in W^{1,p}_{loc}(\Omega, \mathbb{R}^N)$ be a local minimizer of \eqref{functional}. Then, $V_p(Du)  \in B^\lambda_{2,s,\textrm{\text{loc}}}(\Omega, \mathbb{R}^{N \times n})$
and the following estimate holds
\begin{align}
  \displaystyle\int_{B_{1}} \biggl( \displaystyle\int_{B_{R/4}} \dfrac{|\tau_hV_p (Du)|^{2}}{|h|^{2 \lambda }} dx \biggr)^{\frac{s}{2}}  \dfrac{dh}{|h|^{n}} \leq c\left( \int_{B_{R}} (1 + |Du|^p) dx + \sum_{k=1}^{\infty }\Vert g_k \Vert_{L^r (B_{R})} + \Vert g \Vert_{L^{m}(B_R)} \right)^\sigma, 
\end{align}
for every concentric balls $B_R \subset B_{2R} \Subset \Omega$, where $c = c(n, p,q,m, L,l,  \nu,r,  R)$, $\sigma= \sigma(n,p,q,m, \gamma,r)$ and $\lambda:=  \min \{\gamma, \frac{m}{2} \}$ are positive constants.
\end{thm}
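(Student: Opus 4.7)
The plan is to repeat the argument used to prove Theorem \ref{mainthm} almost verbatim, with assumption \eqref{eqf5} taking the place of \eqref{F4} on each dyadic annulus in the displacement $h$, and then to integrate the resulting pointwise-in-$h$ estimate against $|h|^{-n}\,dh$ in order to recover the Besov $B^{\lambda}_{2,s}$ seminorm of $V_{p}(Du)$.

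First I would rerun the difference-quotient Caccioppoli estimate from the proof of Theorem \ref{mainthm}. Testing the Euler--Lagrange inequality satisfied by the minimizer $u$ against $\varphi=\tau_{-h}(\eta^{2}\tau_{h}u)$ for a standard cut-off $\eta$, using the strong convexity \eqref{F2} on the left-hand side, \eqref{F3} together with Young's inequality on the elliptic part of the right-hand side, and handling the $L^{m}$-datum $g$ via Cauchy--Schwarz and the $|h|^{m/2}$-type difference-quotient estimate for translations of the minimizer, one arrives at an estimate of the form
\[
\int_{B_{R/4}} |\tau_{h}V_{p}(Du)|^{2}\,dx \le c\,|h|^{2\lambda}\,\mathcal{R}\bigl(\|k\|_{L^{r}(B_{R})},\|g\|_{L^{m}(B_{R})},U_{R}\bigr),
\]
where $U_{R}:=\int_{B_{R}}(1+|Du|^{p})\,dx$. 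The only place where \eqref{F4} enters this chain is in the bound for $D_{\xi}f(x,\xi)-D_{\xi}f(y,\xi)$. Under \eqref{eqf5}, restricting $h$ to the dyadic annulus $A_{j}:=\{h\colon 2^{-j}\mathrm{diam}(\Omega)\le |h|<2^{-j+1}\mathrm{diam}(\Omega)\}$, the same argument goes through with $k$ replaced by $g_{j}$, yielding for every $h\in A_{j}$
\[
\int_{B_{R/4}} |\tau_{h}V_{p}(Du)|^{2}\,dx \le c\,|h|^{2\lambda}\,\mathcal{R}\bigl(\|g_{j}\|_{L^{r}(B_{R})},\|g\|_{L^{m}(B_{R})},U_{R}\bigr).
\]
The gap condition \eqref{gap.} is invoked exactly as before, and the threshold $\lambda=\min\{\gamma,m/2\}$ emerges identically.

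I would then divide by $|h|^{2\lambda}$, raise to the power $s/2$, multiply by $|h|^{-n}$ and integrate over $B_{1}$. Decomposing $B_{1}=\bigcup_{j\ge 1}A_{j}$ (up to a null set) and using that $\int_{A_{j}}|h|^{-n}\,dh$ is uniformly bounded in $j$, the left-hand side collapses to a series
\[
\int_{B_{1}}\!\left(\int_{B_{R/4}}\!\frac{|\tau_{h}V_{p}(Du)|^{2}}{|h|^{2\lambda}}\,dx\right)^{\!s/2}\!\frac{dh}{|h|^{n}} \le c\sum_{j=1}^{\infty}\mathcal{R}\bigl(\|g_{j}\|_{L^{r}},\|g\|_{L^{m}},U_{R}\bigr)^{s/2}.
\]
A sub/super-additivity inequality applied term-by-term then separates the $j$-dependent and $j$-independent contributions, and the summability $\sum_{j}\|g_{j}\|_{L^{r}}^{s}<\infty$ built into \eqref{eqf5} controls the $j$-dependent piece, producing the sum $\sum_{j}\|g_{j}\|_{L^{r}}$ appearing in the conclusion.

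The main technical obstacle is the bookkeeping: one must track, through the long chain of Young and H\"older inequalities in the Caccioppoli step, the exact exponent with which $\|g_{j}\|_{L^{r}}$ enters $\mathcal{R}$, and check it is compatible with the $\ell^{s}$ summability of $(\|g_{j}\|_{L^{r}})_{j}$ after raising to the power $s/2$. Everything else---the choice of test function, the role of the gap \eqref{gap.}, the balance between the $|h|^{\gamma}$ contribution coming from \eqref{eqf5} and the $|h|^{m/2}$ contribution coming from $g\in L^{m}$, and the final absorption via an iteration lemma---is unchanged from the proof of Theorem \ref{mainthm}.
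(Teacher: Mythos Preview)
The paper gives no separate proof here; it merely states, just before the theorem, that ``assuming \eqref{eqf5} instead of \eqref{F4} and arguing in a similary way, we obtain the following''. Your plan---rerun the whole machinery behind Theorem~\ref{mainthm} (the approximation by $f_j$, the a~priori estimate of Theorem~\ref{AppThm}, and the passage to the limit of Section~\ref{mainthmsec}), replacing $k$ by $g_j$ whenever $h$ lies in the $j$-th dyadic shell, and then integrating the resulting pointwise-in-$h$ estimate against $|h|^{-n}\,dh$---is exactly what the authors have in mind, so in that sense your proposal and the paper's (implicit) proof coincide.

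There is, however, one point you should not leave implicit. You say that after the dyadic decomposition one separates the $j$-dependent from the $j$-independent contributions and controls the former via $\sum_j\|g_j\|_{L^r}^s<\infty$. But the $j$-independent pieces of the right-hand side---those arising from $A_1$ and from the estimate of $B$ in the proof of Theorem~\ref{AppThm}---must also be summed over infinitely many shells, each contributing a factor $\int_{A_j}|h|^{-n}\,dh=c(n)$, and a positive constant summed infinitely often diverges. The fix is that these $j$-independent pieces come with powers $|h|^2$ and $|h|^m$, which strictly exceed $|h|^{2\lambda}$, so that after division by $|h|^{2\lambda}$ a residual factor $|h|^{\epsilon}$ survives and the geometric series $\sum_j 2^{-j\epsilon s/2}$ converges. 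You should spell this out rather than hide it under ``sub/super-additivity''. Note also that the term $c_\varepsilon|h|^m\|g\|_{L^m}^m$ is exactly borderline when $\gamma\ge m/2$ (then $\lambda=m/2$ and $m-2\lambda=0$), so no residual decay is available from it; this is a genuine subtlety that the paper's one-line ``arguing similarly'' does not address either, and which your proposal, as written, does not resolve.
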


Furthermore, we want to analyse the higher differentiability when the integrand $f$ satisfies strict convexity only at infinity, more precisely we assume that there exist positive constants $\nu, L, M, l$ and exponents $2 \leq p < q < \infty$ and $\gamma \in (0,1)$ such that

\begin{equation}\label{H1}
  \nu|\xi|^p \leq f(x,\xi) \leq L(1+|\xi|^q)  \tag{H1}
\end{equation}
\begin{equation}\label{H2}
    \langle D_\xi f(x, \xi)-D_\xi f(x, \eta),\xi-\eta\rangle  \ge M (|\xi|^2+|\eta|^2)^\frac{p-2}{2}|\xi - \eta|^2 \tag{H2}
\end{equation}
\begin{equation}\label{H3}
     |D_\xi f(x, \xi)-D_\xi f(x, \eta)| \le l (|\xi|^2+|\eta|^2)^\frac{q-2}{2}|\xi - \eta| \tag{H3}
\end{equation}
\begin{equation}\label{H4}
    |D_\xi f(x,\xi)-D_\xi f(y, \xi)| \leq |x-y|^{\gamma} (k(x)+k(y)) |\xi|^{q-1} \tag{H4}
\end{equation}
\noindent for a.e.\ $x,y \in \Omega$ and  every $\xi, \eta \in \mathbb{R}^{N \times n}$ with $\lvert \xi \rvert , \lvert \eta \rvert \geq 1$. \\
This class of functionals was first studied by Chipot and Evans in the pioneering paper \cite{CE}. A model case of a functional satisfying previous assumptions is given by
\begin{equation}
    \int_{\Omega} \Big( (|Du|-1)_+^p \ + a(x)(|Du|-1)_+^q \Big) \ dx,\notag
\end{equation}
with a non-negative and bounded function $a(x)$. Its Euler–Lagrange equation naturally arises as a model for optimal transport problems with congestion
effects. We refer to \cite{Bra1,Bra2,CJS} and reference therein for a
detailed derivation of the model.

In recent years there has been
a considerable of interest in the study of higher differentiability of solutions to widely degenerate equations that behaves as the $p$-Laplace operator at infinity, see for instance \cite{Ambrosio,AGPass,Bra1,Bra2,Clop2,Russo}. We also quote the papers \cite{Cupini1, Cupini2, EMM1} where the authors studied asymptotically convex functionals satisfying $(p,q)$-growth conditions with Sobolev coefficients. \\ 
Here, we continue this analysis in a more general setting. Indeed, the main novelty is the treatment of widely degenerate functionals that satisfy non-standard growth and allow a Besov regularity on the $x$-variable. 

When studying the regularity of the solutions to widely degenerate problems, one needs to establish \textit{good} a priori estimates and then find a family of non-degenerate approximating problems, whose solutions posses the regularity needed to establish the a priori estimates. Exploiting the higher differentiability result in Theorem \ref{mainthm}, we are able to prove the following

\begin{thm}\label{WDT}
   Let $f$ satisfy \eqref{F1*}--\eqref{F4*}, and let $g \in L^{m}_{loc}(\Omega, \mathbb{R}^N)$, with $p' \leq m \leq 2$, for exponents $2\leq p  <\frac{n}{\lambda}< r$, $p<q$ such that
\eqref{gap.} holds, where $\lambda:= \min \{\gamma, \frac{m}{2} \}$.
Then, {every local minimizer $u\in W^{1,p}_{loc}(\Omega, \mathbb{R}^N) $ to \eqref{functional} is such that
$V_p(Du)   \in B^\lambda_{2,\infty,loc}(\Omega, \mathbb{R}^{N \times n})$.}
\end{thm}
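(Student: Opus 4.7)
\textbf{Proof plan for Theorem \ref{WDT}.} The strategy is to reduce the widely degenerate case to the setting of Theorem \ref{mainthm} by an approximation argument, since the strict convexity assumption \eqref{H2} (and the growth of the gradient in \eqref{H3}--\eqref{H4}) only holds for $|\xi|,|\eta|\ge 1$ and does not directly fit the hypotheses \eqref{F1}--\eqref{F4}. The plan is to perturb the integrand by an $x$-independent, strictly convex term with $p$-growth, apply the quantitative estimate of Theorem \ref{mainthm} to the perturbed minimizers, and pass to the limit using lower semicontinuity of the Besov seminorm.

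\textbf{Step 1 (Approximation).} I would set, for $\varepsilon\in(0,1]$,
\[
f_\varepsilon(x,\xi):= f(x,\xi)+\varepsilon\,(1+|\xi|^2)^{p/2}.
\]
Since the perturbation is smooth, $x$-independent and strictly $p$-convex, a routine check shows that $f_\varepsilon$ satisfies \eqref{F1}--\eqref{F4}: in \eqref{F2} the ellipticity is now nontrivial also on $\{|\xi|,|\eta|<1\}$ (with a constant that may depend on $\varepsilon$ only on that region), whereas on $\{|\xi|\ge 1\}\cup\{|\eta|\ge 1\}$ one inherits the bound from \eqref{H2} with a constant independent of $\varepsilon$; the upper bound \eqref{F3} is preserved because the perturbation has $p$-growth, hence in particular $q$-growth, and \eqref{F4} is untouched since $\varepsilon(1+|\xi|^2)^{p/2}$ does not depend on $x$.

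\textbf{Step 2 (Approximating minimizers).} On a fixed ball $B_R\Subset\Omega$ I would introduce the Dirichlet problem
\[
\min\Bigl\{\mathcal{F}_\varepsilon(v,B_R):v\in u+W^{1,p}_0(B_R,\mathbb{R}^N)\Bigr\},
\]
whose unique minimizer $u_\varepsilon$ exists by the direct method and strict convexity of $f_\varepsilon$. By testing with $v=u$, the minimality of $u_\varepsilon$ combined with \eqref{H1} and the $L^m$-integrability of $g$ yields a uniform bound on $\int_{B_R}(1+|Du_\varepsilon|^p)\,dx$; a standard argument based on strict monotonicity at infinity and weak lower semicontinuity then produces a subsequence with $u_\varepsilon\to u$ strongly in $W^{1,p}(B_R,\mathbb{R}^N)$, and in particular $V_p(Du_\varepsilon)\to V_p(Du)$ in $L^2(B_R,\mathbb{R}^{N\times n})$.

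\textbf{Step 3 (A priori estimate and passage to the limit).} Because $f_\varepsilon$ satisfies \eqref{F1}--\eqref{F4}, Theorem \ref{mainthm} applies to $u_\varepsilon$ and gives
\[
\int_{B_{R/4}}|\tau_h V_p(Du_\varepsilon)|^2\,dx\le c\,|h|^{2\lambda}\Bigl(\int_{B_R}(1+|Du_\varepsilon|^p)\,dx+\|k\|_{L^r(B_R)}+\|g\|_{L^m(B_R)}\Bigr)^{\sigma}.
\]
If $c$ and the right-hand side are uniform in $\varepsilon$, dividing by $|h|^{2\lambda}$, taking the supremum in $h$, and using that the map $w\mapsto \sup_{h}|h|^{-2\lambda}\|\tau_h w\|_{L^2(B_{R/4})}^2$ is lower semicontinuous under $L^2_{\mathrm{loc}}$ convergence (by Fatou applied inside the supremum) gives $V_p(Du)\in B^\lambda_{2,\infty,\mathrm{loc}}(\Omega,\mathbb{R}^{N\times n})$.

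\textbf{Main obstacle.} The genuinely delicate point is to obtain a constant $c$ in the a priori estimate that is \emph{independent} of $\varepsilon$. Inspection of the proof of Theorem \ref{mainthm} shows that $c$ depends on $M,l,\nu$ from \eqref{F1}--\eqref{F3}; while $l$ and $\nu$ are stable under the perturbation, the ellipticity constant $M_\varepsilon$ in \eqref{F2} for $f_\varepsilon$ degenerates as $\varepsilon\to 0^+$ on $\{|\xi|,|\eta|<1\}$. The fix, in the spirit of \cite{Ambrosio,Bra1,Bra2}, is to split the relevant difference quotient integrals over $\{|Du_\varepsilon|\ge 1\}$ and its complement: on the first set one uses the $\varepsilon$-free ellipticity inherited from \eqref{H2}, while on the second $|V_p(Du_\varepsilon)|$ is uniformly bounded, so its difference quotients are absorbed into a lower-order term with an $|h|^{2\lambda}$ factor. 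A further technical point is the uniform $L^p$-compactness of $(Du_\varepsilon)_\varepsilon$, which is needed to pass to the limit under the nonlinearity $V_p$; this should follow from the uniform energy bound and strict monotonicity at infinity in \eqref{H2}.
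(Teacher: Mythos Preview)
Your perturbation $f_\varepsilon=f+\varepsilon(1+|\xi|^2)^{p/2}$ and the overall plan coincide with the paper's, but there is a genuine gap in Step~2. You assert that ``a standard argument based on strict monotonicity at infinity and weak lower semicontinuity'' yields $u_\varepsilon\to u$ strongly in $W^{1,p}$. No such argument is available here: since $f$ is strictly convex only for $|\xi|\ge 1$, the functional $\mathcal{F}(\cdot,B_R)$ may have \emph{several} minimizers in $u+W^{1,p}_0(B_R)$, and the weak limit $v$ of $(u_\varepsilon)$ is merely \emph{some} minimizer, not necessarily the prescribed $u$. The paper flags this issue explicitly in the introduction and resolves it by adding the bounded penalization $\int_{B_R}\arctan(|w-u|^2)\,dx$ to the approximating problems (see \eqref{Wdfj}); this term is harmless in the a priori estimates (bounded integrand, bounded derivative in $w$) but in the limit forces $\int_{B_R}\arctan(|v-u|^2)\,dx=0$, hence $v=u$ a.e. Without such a device your argument can at best show that \emph{one} minimizer is regular, and the statement ``every local minimizer'' does not follow.

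Two secondary remarks. Your ``Main obstacle'' about the $\varepsilon$-dependence of the ellipticity constant in \eqref{F2} is a legitimate concern; the paper simply asserts that the constants in the resulting estimate are independent of $j$ and points to \cite{Cupini1,Cupini2} for the technique, so your splitting idea is in the right spirit. Also, since strong $W^{1,p}$ convergence of $u_\varepsilon$ is precisely what is not available a priori, the passage to the limit in the Besov seminorm should proceed as in the paper: use boundedness of $(V_p(Du_\varepsilon))$ in $B^\lambda_{2,\infty}$, compact embedding into $L^2_{\rm loc}$, and identify the strong $L^2$ limit with $V_p(Dv)$ through the weak $L^p$ convergence of $Du_\varepsilon$.
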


We point out that the lack of global uniform convexity of the integrand $f$ yields a lack of uniqueness of the local minimizers to \eqref{functional}. Therefore, the sequence of the local minimizers of the approximating functionals converges to a regular minimizer, but we could not say that all the minimizers to \eqref{functional} have the desired regularity. In order to overcome this problem, we introduce in the approximating problems a penalization term that does not effect the a priori estimate and that forces the approximating sequence to converge to an arbitrarily fixed local minimizer (see \cite{Cupini1, Cupini2}).

We briefly describe the structure of the paper. After summarizing some known results and fixing few notation, we recall an approximation lemma in Section \ref{AppSec}. Section \ref{apriorisec} is devoted to the proof of some a priori estimates for solutions to a family of approximating problems. Next, in Section \ref{mainthmsec}, we pass to the limit in the approximating problems. Finally, in Section \ref{WDSec}, we study the widely degenerate case.

\section{Preliminaries}\label{Preliminary}
In this section we introduce some notations and collect several results that we shall use to establish our main result.

We will follow  denote by $c$ or $C$ a general constant that may vary on different occasions, even within the same line of estimates. Relevant dependencies on parameters and special constants will be suitably emphasized using parentheses or subscripts.

In what follows, $B(x,r)=B_{r}(x)= \{ y \in \mathbb{R}^{n} : |y-x | < r  \}$ will denote the ball centered at $x$ of radius $r$. We shall omit the dependence on the center and on the radius when no confusion arises.

We define an auxiliary function by
\begin{center}
$V_{p}(\xi):=( 1 +|\xi|^{2})^\frac{p-2}{4} \xi $
\end{center}
for all $\xi\in \mathbb{R}^{m}$, $m \in \mathbb{N}$. 
For the function $V_{p}$, we recall the following estimate (see e.g. \cite[Lemma 8.3]{giusti}). 
\begin{lem}\label{D1}
Let $1<p<+\infty$. There exists a constant $c=c(n,p)>0$ such that
\begin{center}
$c^{-1}(1+|\xi|^{2}+|\eta|^{2})^{\frac{p-2}{2}} \leq \dfrac{|V_{p}(\xi)-V_{p}(\eta)|^{2}}{|\xi-\eta|^{2}} \leq c(1+|\xi|^{2}+|\eta|^{2})^{\frac{p-2}{2}} $
\end{center}
for any $\xi, \eta \in \mathbb{R}^{m}$, $\xi \neq \eta$.
\end{lem}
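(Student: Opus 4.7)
The strategy is to exploit the explicit radial form $V_p(\xi)=\Phi(|\xi|)\xi$, with $\Phi(t):=(1+t^2)^{(p-2)/4}$, which makes $V_p$ a $C^1$-diffeomorphism of $\mathbb{R}^m$, and to transfer pointwise bounds on the differential $DV_p$ to a difference estimate via the fundamental theorem of calculus along the segment joining $\eta$ and $\xi$.

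Concretely, the first step is to compute
\[
DV_p(z)=\Phi(|z|)\,I+\tfrac{p-2}{2}(1+|z|^2)^{(p-6)/4}\,z\otimes z
\]
and diagonalise it in a frame adapted to $z$. Its eigenvalues are $\Phi(|z|)$ (with multiplicity $m-1$, in directions orthogonal to $z$) and $(1+|z|^2)^{(p-6)/4}\bigl(1+\tfrac{p}{2}|z|^2\bigr)$ (in the direction of $z$); for every $p>1$ a short check shows that both lie between $\min(1,p/2)$ and $\max(1,p/2)$ times $(1+|z|^2)^{(p-2)/4}$, so that
\[
c(p)^{-1}(1+|z|^2)^{\frac{p-2}{2}}|w|^2\le |DV_p(z)w|^2\le c(p)(1+|z|^2)^{\frac{p-2}{2}}|w|^2\qquad \forall\,w\in\mathbb{R}^m.
\]
Parametrising the segment $z(t)=\eta+t(\xi-\eta)$ and writing $V_p(\xi)-V_p(\eta)=\int_0^1 DV_p(z(t))(\xi-\eta)\,dt$, one obtains by Minkowski and Cauchy-Schwarz the upper bound
\[
|V_p(\xi)-V_p(\eta)|^2\le c|\xi-\eta|^2\int_0^1(1+|z(t)|^2)^{\frac{p-2}{2}}\,dt,
\]
while testing the identity against $\xi-\eta$ and squaring produces the matching lower bound
\[
|V_p(\xi)-V_p(\eta)|^2\ge c|\xi-\eta|^2\Bigl(\int_0^1(1+|z(t)|^2)^{\frac{p-2}{4}}\,dt\Bigr)^2.
\]

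It then only remains to compare both integrals with $(1+|\xi|^2+|\eta|^2)^{(p-2)/2}$, and this is the geometric heart of the argument. For $p\ge 2$, convexity yields $|z(t)|^2\le |\xi|^2+|\eta|^2$ and the monotonicity of $s\mapsto(1+s)^{(p-2)/2}$ gives the upper matching for free; the lower matching follows from a short case analysis showing that on a subinterval of length $\gtrsim 1$ one has $|z(t)|\gtrsim \max(|\xi|,|\eta|)$, the only delicate configuration being the almost-antipodal one $\eta\approx-\xi$, which is disposed of by direct computation (using $|V_p(\xi)-V_p(-\xi)|=2(1+|\xi|^2)^{(p-2)/4}|\xi|$). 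For $1<p<2$ the roles of the two exponents interchange and one must instead exclude a neighbourhood of the possible minimiser of $t\mapsto|z(t)|^2$ in $[0,1]$; this splitting argument is the main technical obstacle and is the step where the dependence of the constant on $p$ becomes delicate, but it is elementary once the spectral bound on $DV_p$ has been established.
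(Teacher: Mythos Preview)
The paper does not give its own proof of this lemma but simply refers to \cite[Lemma~8.3]{giusti}. Your outline is precisely the standard argument found there (and in related references such as Acerbi--Fusco): one writes
\[
V_p(\xi)-V_p(\eta)=\int_0^1 DV_p\bigl(\eta+t(\xi-\eta)\bigr)(\xi-\eta)\,dt,
\]
bounds the eigenvalues of the symmetric matrix $DV_p(z)$ above and below by constant multiples of $(1+|z|^2)^{(p-2)/4}$, and then invokes the elementary comparison
\[
\int_0^1\bigl(1+|\eta+t(\xi-\eta)|^2\bigr)^{\alpha}\,dt \;\simeq\; (1+|\xi|^2+|\eta|^2)^{\alpha},\qquad \alpha>-\tfrac12,
\]
which is exactly the ``case analysis / splitting'' step you describe. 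Your spectral computation is correct, and the integral representation together with the testing against $\xi-\eta$ gives the two-sided bound as you claim. In the standard references the final comparison step is usually isolated as a separate lemma rather than argued ad hoc; in particular the antipodal configuration need not be handled by a direct computation of $V_p(\xi)-V_p(-\xi)$, since the integral inequality itself already covers it uniformly. Apart from that cosmetic difference, your proposal matches the cited proof.
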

Now we state a well-known iteration lemma (see \cite[Lemma 6.1]{giusti} for the proof).
\begin{lem}\label{lm2}
Let $\Phi  :  [\frac{R}{2},R] \rightarrow \mathbb{R}$ be a bounded nonnegative function, where $R>0$. Assume that for all $\frac{R}{2} \leq r < s \leq R$ it holds
$$\Phi (r) \leq \theta \Phi(s) +A + \dfrac{B}{(s-r)^2}+ \dfrac{C}{(s-r)^{\gamma}}$$
where $\theta \in (0,1)$, $A$, $B$, $C \geq 0$ and $\gamma >0$ are constants. Then there exists a constant $c=c(\theta, \gamma)$ such that
$$\Phi \biggl(\dfrac{R}{2} \biggr) \leq c \biggl( A+ \dfrac{B}{R^2}+ \dfrac{C}{R^{\gamma}}  \biggr).$$
\end{lem}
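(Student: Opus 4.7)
The plan is to run the classical ``hole-filling'' iteration on a geometric sequence of radii in $[R/2, R]$ converging to $R$, so that the assumed recursive inequality can be iterated and the gain factor $\theta \in (0,1)$ overcomes the blow-up of the inverse powers of $(s-r)$. This is a standard Giusti-type argument; the only delicate point is to choose the geometric ratio so that two different geometric series (with ratios involving $\theta/\tau^2$ and $\theta/\tau^\gamma$) both converge.

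Concretely, I would fix a parameter $\tau \in (0,1)$ to be chosen later and define the increasing sequence
\[
r_i = R - \frac{R}{2}\tau^i, \qquad i = 0,1,2,\ldots
\]
so that $r_0 = R/2$, $r_i \nearrow R$, and $r_{i+1}-r_i = \frac{R}{2}\tau^i(1-\tau)$. Applying the hypothesis to the pair $(r, s) = (r_i, r_{i+1})$ yields
\[
\Phi(r_i) \leq \theta\, \Phi(r_{i+1}) + A + \frac{4B}{R^{2}(1-\tau)^{2}}\, \tau^{-2i} + \frac{2^{\gamma}C}{R^{\gamma}(1-\tau)^{\gamma}}\, \tau^{-\gamma i}.
\]
Iterating this inequality $k$ times and telescoping the powers of $\theta$ gives
\[
\Phi(R/2) \leq \theta^{k}\Phi(r_k) + A \sum_{i=0}^{k-1}\theta^{i} + \frac{4B}{R^{2}(1-\tau)^{2}}\sum_{i=0}^{k-1}\left(\frac{\theta}{\tau^{2}}\right)^{i} + \frac{2^{\gamma}C}{R^{\gamma}(1-\tau)^{\gamma}}\sum_{i=0}^{k-1}\left(\frac{\theta}{\tau^{\gamma}}\right)^{i}.
\]

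The key step, and the only place where care is needed, is to select $\tau$ so that the last two geometric series converge, which requires simultaneously $\tau^{2} > \theta$ and $\tau^{\gamma} > \theta$. Any choice $\tau \in \bigl(\max\{\theta^{1/2},\theta^{1/\gamma}\},\,1\bigr)$ works, and this is possible since $\theta < 1$; for definiteness one may pick $\tau = \tfrac{1}{2}\bigl(1 + \max\{\theta^{1/2},\theta^{1/\gamma}\}\bigr)$, a function of $\theta$ and $\gamma$ alone.

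To conclude, I would let $k \to \infty$: boundedness of $\Phi$ and $\theta^{k} \to 0$ kill the term $\theta^{k}\Phi(r_k)$, while the three geometric series sum to finite quantities depending only on $\theta$, $\tau$, and $\gamma$. Collecting constants into a single $c = c(\theta,\gamma)$ produces exactly
\[
\Phi(R/2) \leq c\left( A + \frac{B}{R^{2}} + \frac{C}{R^{\gamma}}\right),
\]
as required. The main (minor) obstacle is purely bookkeeping: ensuring the choice of $\tau$ is made \emph{before} iterating, and that the constant $c$ depends only on $\theta$ and $\gamma$ and not on $A$, $B$, $C$, $R$, or on $\Phi$ itself.
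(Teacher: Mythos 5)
Your proof is correct and is the standard hole-filling iteration; the paper itself does not prove this lemma but simply cites \cite[Lemma 6.1]{giusti}, and your argument is precisely the classical one found there (geometric sequence of radii, iterate the recursive inequality, choose the ratio $\tau$ so that both series $\sum (\theta/\tau^2)^i$ and $\sum (\theta/\tau^\gamma)^i$ converge, then let $k\to\infty$ and use boundedness of $\Phi$ to kill $\theta^k\Phi(r_k)$). The dependence of $c$ on $\theta$ and $\gamma$ alone is correctly tracked through the choice of $\tau$.
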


\subsection{Difference quotient}
\label{secquo}
We recall some properties of the finite difference quotient operator that will be needed in the sequel. 

\begin{dfn}
Let $F$ be a function defined in an open set $\Omega \subset \mathbb{R}^n$ and let $h \in \mathbb{R}^n$. We call the difference quotient of $F$ with respect to $h$ the function
$$ 
\tau_{h}F(x) :=F(x+h)-F(x) .$$
\end{dfn}
The function $\tau_{h}F$ is defined in the set
$$\tau_{h}\Omega := \{  x \in \Omega : x+h \in \Omega \},$$
and hence in the set
$$\Omega_{|h|}: = \{ x \in \Omega : \mathrm{dist}(x,\partial \Omega)> |h|  \}.$$

We start with the description of some elementary properties that can be found, for example, in \cite{giusti}.
\begin{prop}\label{rapportoincrementale}
Let $F \in W^{1,p}(\Omega)$, with $p \geq1$, and let $G:\Omega \rightarrow \mathbb{R}$ be a measurable function.
Then
\\(i) $\tau_{h}F \in W^{1,p}(\Omega_{|h|})$ and 
$$D_{i}(\tau_{h}F)=\tau_{h}(D_{i}F).$$
(ii) If at least one of the functions $F$ or $G$ has support contained in $\Omega_{|h|}$, then
$$\displaystyle\int_{\Omega}F \tau_h G   dx = -\displaystyle\int_{\Omega} G \tau_{-h}F dx.$$
(iii) We have $$\tau_{h} (FG)(x)= F(x+h)\tau_{h} G(x)+G(x) \tau_{h} F(x).$$
\end{prop}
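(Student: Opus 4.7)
The three claims are completely algebraic in nature: (iii) is a pure pointwise identity, (ii) is a change-of-variables argument, and only (i) requires a little care because it involves weak derivatives.

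I would begin with (iii) since it is the simplest: it follows from the elementary identity
\[
F(x+h)G(x+h) - F(x)G(x) = F(x+h)\bigl[G(x+h)-G(x)\bigr] + G(x)\bigl[F(x+h)-F(x)\bigr],
\]
which is nothing but adding and subtracting $F(x+h)G(x)$. Next I would handle (ii): by the definition of $\tau_h$,
\[
\int_\Omega F(x)\,\tau_h G(x)\,dx = \int_\Omega F(x)G(x+h)\,dx - \int_\Omega F(x)G(x)\,dx,
\]
and the support assumption on $F$ or on $G$ guarantees that the translated integrand still lives inside $\Omega$, so the change of variables $y=x+h$ in the first term gives $\int_\Omega F(y-h)G(y)\,dy$; rearranging yields $-\int_\Omega G(x)\,\tau_{-h}F(x)\,dx$.

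For (i), the target is to show that, for every test function $\varphi\in C^\infty_c(\Omega_{|h|})$,
\[
\int_\Omega \tau_h F(x)\,D_i\varphi(x)\,dx = -\int_\Omega \tau_h(D_i F)(x)\,\varphi(x)\,dx.
\]
I would split the left-hand side into two integrals and, in the one involving $F(x+h)$, perform the change of variables $y=x+h$: since $\mathrm{supp}\,\varphi\subset\Omega_{|h|}$, the translated test function $\varphi(\cdot-h)$ is compactly supported in $\Omega$, so the definition of the distributional derivative of $F\in W^{1,p}(\Omega)$ applies and yields $\int F(y)D_i\varphi(y-h)\,dy = -\int D_iF(y)\,\varphi(y-h)\,dy$. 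Changing variables back and comparing with the analogous identity for the second piece produces exactly $-\int \tau_h(D_i F)\,\varphi\,dx$, which identifies $\tau_h(D_i F)$ as the weak partial derivative of $\tau_h F$. Finally, since $D_iF\in L^p(\Omega)$, its translate also lies in $L^p(\Omega_{|h|})$, so $\tau_h F\in W^{1,p}(\Omega_{|h|})$.

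The only potentially delicate point is the bookkeeping of supports and domains of definition in (i) and (ii): one must make sure that all translated functions and all changes of variables stay inside $\Omega$, which is precisely what the set $\Omega_{|h|}$ and the support hypotheses are designed to guarantee. Once this is handled, no further analytic machinery (such as mollification) is needed.
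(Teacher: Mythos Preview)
Your argument is correct and is the standard textbook proof of these three facts. The paper itself does not supply a proof of this proposition at all: it simply introduces it with the sentence ``We start with the description of some elementary properties that can be found, for example, in \cite{giusti}'' and then states the result without proof. So there is nothing to compare against in the paper beyond the citation, and your self-contained derivation is exactly what one finds in the cited reference.
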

The next result about the finite difference operator is a kind of integral version of Lagrange Theorem.
\begin{lem}\label{ldiff}
If $0<\rho<R,$ $|h|<\frac{R-\rho}{2},$ $1<p<+\infty$ and $F\in W^{1,p}(B_{R})$, then
\begin{center}
$\displaystyle\int_{B_{\rho}} |\tau_{h}F(x)|^{p} dx \leq c(n,p)|h|^{p} \displaystyle\int_{B_{R}} |DF(x)|^{p} dx$.
\end{center}
Moreover,
\begin{center}
$\displaystyle\int_{B_{\rho}} |F(x+h)|^{p} d x \leq  \displaystyle\int_{B_{R}} |F(x)|^{p}d x$.
\end{center}
\end{lem}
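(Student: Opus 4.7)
\medskip
\noindent\textbf{Proof proposal for Lemma \ref{ldiff}.}
The plan is to first establish the estimate for smooth functions via the fundamental theorem of calculus, and then to extend it to $W^{1,p}(B_R)$ by an approximation argument. Throughout, the hypothesis $|h|<(R-\rho)/2$ guarantees that for every $x\in B_{\rho}$ and every $t\in[0,1]$ one has $x+th\in B_{(R+\rho)/2}\Subset B_R$, so all translations occurring below stay inside $B_R$.

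I would first assume $F\in C^{1}(\overline{B_R})$. Writing
$$
\tau_h F(x)=F(x+h)-F(x)=\int_0^1 DF(x+th)\cdot h\,dt
$$
for $x\in B_\rho$, an application of Jensen's inequality with respect to the probability measure $dt$ on $[0,1]$ gives
$$
|\tau_h F(x)|^p\le |h|^p\int_0^1 |DF(x+th)|^p\,dt.
$$
Integrating over $B_\rho$, invoking Fubini's theorem to exchange the order of integration, and applying the change of variables $y=x+th$ for each fixed $t$ (whose Jacobian is $1$) yields
$$
\int_{B_\rho}|\tau_h F(x)|^p\,dx\le |h|^p\int_0^1\!\int_{B_\rho+th}|DF(y)|^p\,dy\,dt\le |h|^p\int_{B_R}|DF(y)|^p\,dy,
$$
since $B_\rho+th\subset B_{(R+\rho)/2}\subset B_R$. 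This is the desired inequality with $c(n,p)=1$ in the smooth case.

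To pass to the general case, I would approximate $F\in W^{1,p}(B_R)$ by a sequence $F_j\in C^\infty(B_R)\cap W^{1,p}(B_R)$ with $F_j\to F$ in $W^{1,p}(B_{R'})$ for any $R'<R$ (for instance via Friedrichs mollification on an intermediate ball $B_{R'}$ with $\rho+|h|<R'<R$). Applying the smooth case to each $F_j$ on $B_{R'}$ and passing to the limit by the strong $L^p$-convergence of $F_j$ and $DF_j$ gives the first asserted estimate. The second inequality is simply the translation invariance of the Lebesgue measure: since $x\in B_\rho$ and $|h|<(R-\rho)/2$ imply $x+h\in B_{(R+\rho)/2}\subset B_R$, the change of variables $y=x+h$ gives
$$
\int_{B_\rho}|F(x+h)|^p\,dx=\int_{B_\rho+h}|F(y)|^p\,dy\le\int_{B_R}|F(y)|^p\,dy.
$$

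I do not expect a serious obstacle here, as the argument is entirely classical; the only point requiring a small amount of care is checking that the intermediate balls involved in the smooth-case computation and in the mollification step are indeed contained in $B_R$, which is why the hypothesis is stated with the factor $1/2$ in $|h|<(R-\rho)/2$ rather than the sharper $|h|<R-\rho$.
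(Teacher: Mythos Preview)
Your argument is correct and entirely classical. Note, however, that the paper does not actually prove this lemma: it is listed among the preliminary results (introduced as ``a kind of integral version of Lagrange Theorem'') and implicitly referred to standard textbooks such as \cite{giusti}. Your proof is precisely the one found in those references: the fundamental theorem of calculus along the segment $[x,x+h]$, Jensen's inequality, Fubini with a translation, and a density argument for the passage from $C^1$ to $W^{1,p}$. There is nothing to compare and no gap to flag.
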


We recall a fractional version of Sobolev embedding property.
\begin{lem}\label{EmbendMigliore}
Let $F \in L^2(B_R)$. Suppose that there exist $\rho \in (0,R)$, $0 < \alpha < 1$ and $M >0$ such that $$  \displaystyle\int_{B_{\rho}}|\tau_{h}F(x)|^2 dx \leq M^2 |h|^{2 \alpha}, $$ for every $h$ such that $|h| < \frac{R-\rho}{2}$. Then $F \in L^{\frac{2n}{n-2 \beta}}(B_{\rho})$ for every $\beta \in (0, \alpha)$ and $$\Vert  F \Vert _{L^{\frac{2n}{n-2 \beta}}(B_{\rho})} \leq c (M+ \Vert F \Vert _{L^2(B_R)}),  $$ with $c=c(n,R,\rho, \alpha, \beta)$.
\end{lem}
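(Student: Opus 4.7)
\medskip

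The plan is to deduce the result from the standard Sobolev--Slobodeckij embedding $W^{\beta,2}\hookrightarrow L^{\frac{2n}{n-2\beta}}$, after first showing that the hypothesis, which says exactly that $F$ lies (locally) in the Nikolskii space $N^{\alpha,2}(B_\rho)$, upgrades to $W^{\beta,2}(B_{\rho'})$ for every $\beta<\alpha$ on a slightly smaller ball $B_{\rho'}$. Since the statement is local, the natural strategy is a cutoff/extension argument followed by the integral characterisation of the Gagliardo seminorm.

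First I would fix $\beta\in(0,\alpha)$ and an intermediate radius $\rho'\in(0,\rho)$ (one can take $\rho'=\rho/2$, absorbing the loss in the constants), choose a cutoff $\eta\in C_c^\infty(B_\rho)$ with $\eta\equiv 1$ on $B_{\rho'}$ and $|D\eta|\le c/(\rho-\rho')$, and set $\tilde F:=\eta F$ extended by zero to $\mathbb{R}^n$. Using $\tau_h(\eta F)(x)=\eta(x+h)\tau_h F(x)+F(x)\tau_h\eta(x)$ together with the smoothness of $\eta$, I obtain
\[
\int_{\mathbb{R}^n}|\tau_h \tilde F|^2\,dx\le C\bigl(M^2|h|^{2\alpha}+|h|^2\,\|F\|_{L^2(B_R)}^2\bigr)
\qquad\text{for }|h|<\tfrac{R-\rho}{2},
\]
with $C=C(n,\rho,\rho')$, and the trivial bound $\int_{\mathbb{R}^n}|\tau_h\tilde F|^2\,dx\le 4\|F\|_{L^2(B_R)}^2$ for every $h$.

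Next I would estimate the Gagliardo seminorm of $\tilde F$ of order $\beta$ by the standard identity
\[
[\tilde F]_{W^{\beta,2}(\mathbb{R}^n)}^2
=\int_{\mathbb{R}^n}\frac{1}{|h|^{n+2\beta}}\int_{\mathbb{R}^n}|\tau_h\tilde F(x)|^2\,dx\,dh.
\]
Splitting the outer integral into $\{|h|<\delta\}$ (with $\delta=(R-\rho)/2$) and $\{|h|\ge\delta\}$, the first region is controlled by the Nikolskii estimate above and is finite because $\beta<\alpha$ (and $\beta<1$), while the second is handled by the crude $L^2$ bound and converges because of the weight $|h|^{-n-2\beta}$ at infinity. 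Collecting constants yields
\[
\|\tilde F\|_{W^{\beta,2}(\mathbb{R}^n)}\le c\,(M+\|F\|_{L^2(B_R)}),
\]
with $c=c(n,R,\rho,\alpha,\beta)$.

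Finally I would invoke the Sobolev--Slobodeckij embedding $W^{\beta,2}(\mathbb{R}^n)\hookrightarrow L^{\frac{2n}{n-2\beta}}(\mathbb{R}^n)$, valid since $2\beta<n$ (using $\beta<\alpha<1\le n/2$), and restrict to $B_{\rho'}$ where $\tilde F=F$; a further standard chaining/covering argument (or simply reabsorbing the loss from $\rho'$ to $\rho$ through the Lemma \ref{lm2}-type iteration) gives the claimed estimate on $B_\rho$. The only delicate point is the cutoff step: one has to check that the extra term produced by $\tau_h\eta$ does not spoil the Nikolskii scaling, which is immediate because $|h|^2\le |h|^{2\alpha}\cdot|h|^{2(1-\alpha)}$ is bounded for $|h|$ small, so that it is absorbed into the constant.
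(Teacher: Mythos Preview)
The paper does not prove this lemma; it is simply recalled as a known fractional embedding, so there is no argument in the paper to compare against. Your route---cutoff, control of the Gagliardo seminorm by splitting the $h$-integral at a fixed scale, and then the embedding $W^{\beta,2}\hookrightarrow L^{2n/(n-2\beta)}$---is standard and is correctly carried out down to the estimate on the smaller ball $B_{\rho'}$.

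The only problematic point is the final sentence, where you claim that a covering argument or a Lemma~\ref{lm2}-type iteration upgrades the conclusion from $B_{\rho'}$ to $B_\rho$. Neither works here. A covering of $B_\rho$ by balls whose enlargements stay inside $B_\rho$ necessarily misses a neighbourhood of $\partial B_\rho$, and the iteration lemma requires an inequality $\Phi(r)\le\theta\Phi(s)+\dots$ for all $r<s$, which you never produce (your bound on $B_{\rho'}$ contains no $\theta\Phi(\rho)$ term to reabsorb). The obstruction is structural: the Nikolskii hypothesis is available only on $B_\rho$, so your cutoff $\eta$ must be supported there, and then $|D\eta|\sim(\rho-\rho')^{-1}$ forces the constant to blow up as $\rho'\uparrow\rho$. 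To obtain the conclusion on all of $B_\rho$ as stated, replace the cutoff by a bounded extension operator $B^\alpha_{2,\infty}(B_\rho)\to B^\alpha_{2,\infty}(\mathbb{R}^n)$ (available for balls, or more generally Lipschitz domains), after noting that the hypothesis together with the trivial bound $\|\tau_hF\|_{L^2}\le 2\|F\|_{L^2}$ for $|h|\ge(R-\rho)/2$ yields $\|F\|_{B^\alpha_{2,\infty}(B_\rho)}\le c(M+\|F\|_{L^2(B_R)})$; then apply the global embedding to the extension and restrict. Alternatively, simply accept the estimate on $B_{\rho'}$ for every $\rho'<\rho$: this weaker form is all that the paper actually uses (see the passage from $B_t$ to $B_s$ in the proof of Theorem~\ref{AppThm}).
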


\subsection{Besov spaces}
\label{secbesov}
We give the definition of Besov spaces as done in \cite[Section 2.5.12]{haroske}.
\begin{dfn}
 Let $1 \le p < +\infty$ and $0<\alpha <1 $. 
Given $1 \leq s< +\infty$,  we say that a function $v : \mathbb{R}^n \rightarrow \mathbb{R}^m$, $m \in \mathbb{N}$, belongs to the Besov space $B^{\alpha}_{p,s}(\mathbb{R}^{n})$ if, and only if, $v \in L^{p}(\mathbb{R}^{n})$ and
 \begin{center}
$[v]_{B^{\alpha}_{p,s}(\mathbb{R}^{n})}: =  \biggl( \displaystyle\int_{\mathbb{R}^{n}} \biggl( \displaystyle\int_{\mathbb{R}^{n}} \dfrac{|\tau_hv(x)|^{p}}{|h|^{\alpha p}} dx \biggr)^{\frac{s}{p}}  \dfrac{dh}{|h|^{n}} \biggr)^{\frac{1}{s}} < + \infty  $.
\end{center}
Equivalently, we could simply say that $v \in L^{p}(\mathbb{R}^{n})$ and $\frac{\tau_{h}{v}}{|h|^{\alpha}} \in L^{s}\bigl( \frac{dh}{|h|^{n}}; L^{p}(\mathbb{R}^{n}) \bigr)$.
\\When $s = + \infty$, the Besov space $B^{\alpha}_{p,\infty}(\mathbb{R}^{n})$ consists of functions $v \in L^{p}(\mathbb{R}^{n})$ such that
\begin{center}
$[v]_{B^{\alpha}_{p, \infty}(\mathbb{R}^{n})} :=  \displaystyle\sup_{h \in \mathbb{R}^{n}} \biggl( \displaystyle\int_{\mathbb{R}^{n}} \dfrac{|\tau_hv(x)|^{p}}{|h|^{\alpha p}} dx \biggr)^{\frac{1}{p}} < +\infty $.
\end{center}

\noindent Accordingly, for $1 \le s \le + \infty$, the Besov space $B^{\alpha}_{p,s}(\mathbb{R}^{n})$ is normed with
\begin{center}
$\Vert v \Vert_{B^{\alpha}_{p,s}(\mathbb{R}^{n})}: = \Vert v \Vert_{L^{p}(\mathbb{R}^{n})} + [v]_{B^{\alpha}_{p,s}(\mathbb{R}^{n})} $.
\end{center}
\end{dfn}
Observe that, if $1 \le s < + \infty$, integrating for $h \in B(0, \delta)$ for a fixed $\delta >0$ then an equivalent norm is obtained, because
\begin{center}
$\biggl( \displaystyle\int_{\{|h| \geq \delta\}} \biggl( \displaystyle\int_{\mathbb{R}^{n}} \dfrac{|\tau_h v(x)|^{p}}{|h|^{\alpha p}} dx \biggr)^{\frac{s}{p}}  \dfrac{dh}{|h|^{n}} \biggr)^{\frac{1}{s}} \leq c(n, \alpha,p,s, \delta) \Vert v \Vert_{L^{p}(\mathbb{R}^{n})} $.
\end{center}
In the case $s = + \infty$, one can simply take supremum over $|h| \leq \delta$ and obtain an equivalent norm. By construction, one has $B^{\alpha}_{p, s}(\mathbb{R}^{n}) \subset L^{p}(\mathbb{R}^{n})$. One also has the following version of Sobolev embeddings (a proof can be found at \cite[Proposition 7.12]{haroske}).
\begin{lem}\label{3.1}
Suppose that $0 < \alpha <1$.
\\ (a) If $1 < p < \frac{n}{\alpha}$ and $1 \leq s \leq p^{*}_{\alpha} = \frac{np}{n- \alpha p}$, then there is a continuous embedding $B^{\alpha}_{p, s}(\mathbb{R}^{n}) \subset L^{p^{*}_{\alpha}}(\mathbb{R}^{n})$.
\\ (b) If $p = \frac{n}{\alpha}$ and $1 \leq s \leq + \infty$, then there is a continuous embedding $B^{\alpha}_{p, s}(\mathbb{R}^{n}) \subset BMO(\mathbb{R}^{n})$,
\\ where $BMO$ denotes the space of functions with bounded mean oscillations \emph{\cite[Chapter 2]{giusti}}.
\end{lem}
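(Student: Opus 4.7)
The plan is to derive both embeddings from the real interpolation theory of $L^p$ and Sobolev spaces, together with a direct Campanato-type oscillation argument at the critical exponent. The two key ingredients are the interpolation identity
\[
B^{\alpha}_{p,s}(\mathbb{R}^n) = \bigl(L^{p}(\mathbb{R}^n),\, W^{k,p}(\mathbb{R}^n)\bigr)_{\alpha/k,\, s},
\]
valid for any integer $k > \alpha$, and the Lorentz interpolation formula $(L^{p_0}, L^{p_1})_{\theta, s} = L^{p_\theta, s}$ with $1/p_\theta = (1-\theta)/p_0 + \theta/p_1$. Both are classical facts from the Lions--Peetre theory, and once available, the embeddings will follow by the functoriality of real interpolation.

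For part (a), I would choose an integer $k$ with $k > \alpha$ and $kp < n$ (possible since $p < n/\alpha$: take $k=1$ when $p<n$ and $k$ sufficiently large otherwise). The classical Sobolev embedding gives $W^{k,p}(\mathbb{R}^n) \hookrightarrow L^{q_k}(\mathbb{R}^n)$ with $q_k = np/(n-kp)$, while $L^p \hookrightarrow L^p$ trivially. Real interpolation at level $\theta = \alpha/k$ with second index $s$ preserves both embeddings, so
\[
B^{\alpha}_{p,s}(\mathbb{R}^n) \hookrightarrow (L^p, L^{q_k})_{\alpha/k, s} = L^{p^{*}_\alpha, s}(\mathbb{R}^n),
\]
with $1/p^{*}_\alpha = (n - \alpha p)/(np)$ matching the definition in the lemma. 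The final embedding of Lorentz into Lebesgue spaces, $L^{p^{*}_\alpha, s} \hookrightarrow L^{p^{*}_\alpha}$, is standard and holds exactly when $s \le p^{*}_\alpha$, precisely the imposed hypothesis.

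For part (b), the interpolation route is less direct because integer-order Sobolev spaces $W^{k,p}$ do not embed into $\mathrm{BMO}$ at the critical exponent when $\alpha$ is non-integer. I would instead proceed by a direct oscillation argument. Fixing a ball $B = B(x_0, r)$, estimate
\[
\frac{1}{|B|}\int_B |v - v_B|\, dy \le \frac{1}{|B|^2}\int_B \int_B |v(x) - v(y)|\, dx\, dy,
\]
change variables to $u = y-x$, apply H\"older's inequality at exponent $n/\alpha$ in the inner integral, and use the bound $\|\tau_u v\|_{L^{n/\alpha}} \le |u|^\alpha [v]_{B^{\alpha}_{n/\alpha, \infty}}$ coming from the definition of the Besov seminorm. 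The exponent identity $\alpha\cdot (n/\alpha) = n$ makes the resulting bound scale-invariant, giving $\mathrm{osc}_B v \lesssim [v]_{B^{\alpha}_{n/\alpha, \infty}}$ uniformly in $B$, which is the BMO bound. The monotonicity $B^{\alpha}_{n/\alpha, s} \hookrightarrow B^{\alpha}_{n/\alpha, \infty}$ for all $s \le \infty$ extends the estimate to every allowed second index.

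The main technical obstacle, common to both parts, is the interpolation identification of $B^{\alpha}_{p,s}$ with $(L^p, W^{k,p})_{\alpha/k, s}$. This requires matching the $K$-functional of the pair $(L^p, W^{k,p})$ with the $L^p$-modulus of smoothness on which the finite-difference Besov seminorm is built, up to equivalent norms. A clean proof is given in Triebel's monographs and in the reference \cite{haroske} cited in the paper; it is nontrivial but can reasonably be treated as a black box, after which the remainder of the argument is routine real-interpolation manipulation together with the classical Sobolev embedding.
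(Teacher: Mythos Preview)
The paper does not prove this lemma; it is quoted as a known result with a reference to \cite[Proposition~7.12]{haroske}. Your proposal therefore supplies more than the paper does, and the overall strategy via real interpolation together with a direct oscillation estimate is sound in spirit.

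There is, however, a genuine gap in your argument for part (a). You claim one can always choose an integer $k$ with $k > \alpha$ and $kp < n$, writing ``take $k=1$ when $p<n$ and $k$ sufficiently large otherwise''. This is backwards: since $0<\alpha<1$ forces $k \ge 1$, the condition $kp<n$ already requires $p<n$, and taking $k$ larger only increases $kp$. The hypothesis $p<n/\alpha$ with $\alpha<1$ is strictly weaker than $p<n$, so the interpolation route through the integer-order Sobolev embedding $W^{k,p}\hookrightarrow L^{np/(n-kp)}$ breaks down whenever $n \le p < n/\alpha$, a range that is not excluded by the statement (and, in the paper's applications with $p\ge 2$ and $n\ge 2$, does occur). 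Standard repairs are either to interpolate between two Besov spaces $B^{\alpha_0}_{p,1}$ and $B^{\alpha_1}_{p,1}$ after first proving the embedding for $s=1$ via the Bessel-potential inclusion $B^{\alpha}_{p,1}\subset H^{\alpha,p}\subset L^{p^*_\alpha}$, or to bypass interpolation altogether and argue directly from the Littlewood--Paley decomposition and Bernstein's inequality, both of which need only $\alpha p<n$.

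Your argument for part (b) is correct: the scaling computation you outline gives a uniform bound on the mean oscillation, and the reduction to $s=\infty$ via the monotone inclusion $B^{\alpha}_{n/\alpha,s}\subset B^{\alpha}_{n/\alpha,\infty}$ is legitimate.
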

We recall the following inclusions between Besov spaces (\cite[Proposition 7.10 and Formula (7.35)]{haroske}).
\begin{lem}\label{3.2}
Suppose that $0 < \beta < \alpha < 1 $.
\\ (a) If $1 < p < +\infty$ and $1 \leq s \leq t \leq + \infty$, then $B^{\alpha}_{p, s}(\mathbb{R}^{n}) \subset B^{\alpha}_{p, t}(\mathbb{R}^{n})$.
\\ (b) If $1 < p < +\infty$ and $1 \leq s , t \leq + \infty$, then $B^{\alpha}_{p, s}(\mathbb{R}^{n}) \subset B^{\beta}_{p, t}(\mathbb{R}^{n})$.
\\ (c) If $1 \le s \le + \infty$, then $B^{\alpha}_{\frac{n}{\alpha}, s}(\mathbb{R}^{n}) \subset B^{\beta}_{\frac{n}{\beta}, s}(\mathbb{R}^{n})$.
\end{lem}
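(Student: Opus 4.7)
The plan is to reduce all three inclusions to properties of the $L^p$-modulus of continuity
\[
\omega_p(v,r) := \sup_{|h| \le r} \|\tau_h v\|_{L^p(\mathbb{R}^n)},
\]
which is non-decreasing in $r$. Via a polar-coordinate dyadic decomposition one first verifies the standard equivalences
\[
[v]_{B^\alpha_{p,s}(\mathbb{R}^n)}^{s} \simeq \int_0^\delta \Big( \frac{\omega_p(v,r)}{r^\alpha} \Big)^{s} \frac{dr}{r} \ (s < \infty), \qquad [v]_{B^\alpha_{p,\infty}(\mathbb{R}^n)} \simeq \sup_{0 < r < \delta} \frac{\omega_p(v,r)}{r^\alpha},
\]
for any fixed $\delta > 0$, up to the always-preserved $L^p$ term.

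For part (a), the key intermediate step is $B^\alpha_{p,s} \subset B^\alpha_{p,\infty}$ for every $s \in [1,\infty)$. Setting $\phi(r) := r^{-\alpha}\omega_p(v,r)$, the monotonicity of $\omega_p$ yields $\phi(r) \ge 2^{-\alpha}\phi(r_0)$ for all $r \in [r_0, 2r_0]$, so integrating against $dr/r$ on that dyadic interval gives
\[
\phi(r_0)^s \le \frac{2^{\alpha s}}{\log 2} \int_{r_0}^{2 r_0} \phi(r)^s \, \frac{dr}{r} \le C\, [v]_{B^\alpha_{p,s}}^{s},
\]
hence $\|\phi\|_\infty \lesssim [v]_{B^\alpha_{p,s}}$. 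Once this is in hand, for $s \le t < \infty$ the elementary interpolation
\[
\int \phi^{t} \, \frac{dr}{r} \le \|\phi\|_{\infty}^{t-s} \int \phi^{s} \, \frac{dr}{r}
\]
closes the embedding; the case $t = \infty$ coincides with the intermediate step itself.

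For part (b), by (a) it suffices to prove $B^\alpha_{p,\infty} \subset B^\beta_{p,t}$ when $\beta < \alpha$. The hypothesis $\omega_p(v,r) \le C r^\alpha$ rewrites as $r^{-\beta}\omega_p(v,r) \le C r^{\alpha - \beta}$; when $t < \infty$, the integral $\int_0^\delta r^{(\alpha - \beta)t - 1} \, dr$ converges near $0$ because $\alpha - \beta > 0$ and $t \ge 1$, giving the required seminorm control, while for $t = \infty$ the sup bound is immediate.

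Part (c) requires a genuinely different tool because one is also changing the target Lebesgue exponent. The plan is to invoke the Littlewood--Paley characterization
\[
[v]_{B^\alpha_{p,s}(\mathbb{R}^n)}^{s} \simeq \sum_{j \ge 0} \big( 2^{j\alpha} \|\Delta_j v\|_{L^p(\mathbb{R}^n)} \big)^{s}
\]
with the standard dyadic blocks $\Delta_j v$ (and the $\ell^\infty$-version for $s = \infty$). Since $\Delta_j v$ has Fourier support in an annulus of radius $\sim 2^j$, Bernstein's inequality provides $\|\Delta_j v\|_{L^{n/\beta}} \le C \, 2^{j(\alpha - \beta)}\, \|\Delta_j v\|_{L^{n/\alpha}}$, whence $2^{j\beta}\|\Delta_j v\|_{L^{n/\beta}} \le C \, 2^{j\alpha}\|\Delta_j v\|_{L^{n/\alpha}}$, and the inclusion $B^\alpha_{n/\alpha,s} \subset B^\beta_{n/\beta,s}$ follows by taking $\ell^s$-norms in $j$. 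Part (c) is the main obstacle: the difference-quotient definition used throughout the paper does not directly accommodate a change of target integrability, so one must pass (at least transiently) to the frequency side via Littlewood--Paley blocks or an equivalent atomic/wavelet characterization; by contrast parts (a) and (b) are soft consequences of the monotonicity of $\omega_p$.
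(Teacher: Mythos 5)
Your argument is sound, and it is worth noting that the paper itself gives no proof of this lemma: it is quoted from Haroske's book (Proposition 7.10 and Formula (7.35) there), so there is no in-paper argument to compare against, only a citation. What you have done is supply a genuine self-contained proof. For parts (a) and (b) your reduction to the $L^p$-modulus of continuity $\omega_p(v,r)$ is both correct and elementary: the dyadic-averaging trick $\phi(r_0)^s \le \frac{2^{\alpha s}}{\log 2}\int_{r_0}^{2r_0}\phi(r)^s\,\frac{dr}{r}$, which exploits nothing but monotonicity of $\omega_p$, cleanly gives $B^\alpha_{p,s}\subset B^\alpha_{p,\infty}$, and the remaining steps — H\"older/interpolation in the Lorentz parameter for (a), the convergence of $\int_0^\delta r^{(\alpha-\beta)t-1}\,dr$ for (b) — are correct. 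You are also right that part (c) cannot be reached by the difference-quotient definition alone, since one is trading regularity $\alpha\to\beta$ for integrability $n/\alpha\to n/\beta$ along the Sobolev line $\alpha-n/p=\beta-n/q=0$; passing to a Littlewood--Paley decomposition and using Bernstein's inequality $\|\Delta_j v\|_{L^{n/\beta}}\lesssim 2^{j(\alpha-\beta)}\|\Delta_j v\|_{L^{n/\alpha}}$ on each annular block is exactly the standard mechanism, and it is the same one underlying Haroske's treatment. Two small points you might make explicit in a polished write-up: the $\ell^s$ sum in the Littlewood--Paley characterization should include the low-frequency block $S_0 v$ (for which Bernstein on a ball still gives $\|S_0 v\|_{L^{n/\beta}}\lesssim \|S_0 v\|_{L^{n/\alpha}}$), so that the full norms, not just the seminorms, are controlled; and in parts (a)--(b) one should mention once that the modulus-of-continuity seminorm over $0<r<\delta$ is equivalent to the full $B^\alpha_{p,s}$ seminorm because the contribution of $|h|\ge\delta$ is dominated by $\|v\|_{L^p}$, a fact the paper records right after the Besov space definition.
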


Combining Lemmas \ref{3.1} and \ref{3.2}, we get the following Sobolev type embedding theorem for Besov spaces $B^\alpha_{p,\infty}(\mathbb{R}^n)$.

\begin{lem}\label{besovembed}
Suppose that $0 < \alpha < 1$ and $1 < p < \frac{n}{\alpha}$. There is a continuous embedding $B^\alpha_{p,\infty}(\mathbb{R}^n) \subset L^{p^*_\beta}(\mathbb{R}^n)$, for every $0 < \beta < \alpha$. Moreover, for every function $F \in B^\alpha_{p,\infty}(\mathbb{R}^n)$ the following local estimate
\begin{equation}
\Vert F \Vert _{L^{\frac{np}{n-\beta p}}(B_\varrho)} \leq \dfrac{c}{(R-\varrho)^{\delta}} (\Vert F \Vert_{L^{p}(B_R)} + [F]_{B^{\alpha}_{p,q}(B_R)})
\end{equation}
holds for every ball $B_\varrho \subset B_R$, with $c=c(n,p,q,\alpha,\beta)$ and $\delta=\delta(n,p,q)$.
\end{lem}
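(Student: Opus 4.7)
The plan is to first establish the global embedding by chaining Lemmas~\ref{3.2}(b) and~\ref{3.1}(a), and then to localize through a standard cutoff argument.

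For the global statement, fix any $\beta \in (0,\alpha)$ and an auxiliary exponent $s$ with $1 \le s \le p^{*}_{\beta}$, for instance $s = 1$. Since $0 < \beta < \alpha < 1$, Lemma~\ref{3.2}(b) provides the continuous embedding $B^{\alpha}_{p,\infty}(\mathbb{R}^n) \hookrightarrow B^{\beta}_{p,s}(\mathbb{R}^n)$. Because $1 < p < n/\alpha < n/\beta$, Lemma~\ref{3.1}(a) applied with exponent $\beta$ in place of $\alpha$ yields $B^{\beta}_{p,s}(\mathbb{R}^n) \hookrightarrow L^{p^{*}_{\beta}}(\mathbb{R}^n)$. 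Composing the two embeddings proves the first assertion.

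For the local estimate I would proceed by a cutoff argument. Fix $\eta \in C^{\infty}_{c}(B_R)$ with $0 \le \eta \le 1$, $\eta \equiv 1$ on $B_\varrho$ and $|D\eta| \le c/(R-\varrho)$, and extend $F$ by zero outside $B_R$. Then $\eta F$ is globally defined on $\mathbb{R}^n$ and compactly supported in $B_R$, and the product rule
\[
 \tau_{h}(\eta F)(x) = \eta(x+h)\,\tau_{h} F(x) + F(x)\,\tau_{h} \eta(x),
\]
combined with $|\tau_h \eta| \le c\,|h|/(R-\varrho)$, gives the bound after splitting the $h$-region into two regimes: for $|h| \le (R-\varrho)/2$ the term $\tau_h F$ is controlled by $[F]_{B^{\alpha}_{p,\infty}(B_R)}|h|^{\alpha}$ (restricting to pairs that both lie in $B_R$), while for $|h| > (R-\varrho)/2$ one simply estimates $|\tau_h(\eta F)| \le 2\,|\eta F|$ and uses $|h|^{\alpha} \ge c\,(R-\varrho)^{\alpha}$. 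Combining, one arrives at
\[
 \Vert \eta F \Vert_{B^{\alpha}_{p,\infty}(\mathbb{R}^n)} \le \frac{c}{(R-\varrho)^{\alpha}} \Bigl( \Vert F \Vert_{L^{p}(B_R)} + [F]_{B^{\alpha}_{p,\infty}(B_R)} \Bigr).
\]
Applying the already-established global embedding to $\eta F$ and restricting to $B_\varrho$, where $\eta \equiv 1$, produces the claimed local inequality with an exponent $\delta$ depending only on $n,p,\alpha,\beta$.

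The main obstacle I anticipate is the careful bookkeeping of boundary effects introduced by the cutoff. The seminorm $[F]_{B^{\alpha}_{p,\infty}(B_R)}$ only controls $\tau_h F$ for pairs $x, x+h \in B_R$, whereas the global seminorm of $\eta F$ integrates over all of $\mathbb{R}^n$; the zero extension together with the factor $F\,\tau_h\eta$ is what generates the $(R-\varrho)^{-\delta}$ blow-up, and tracking the powers of $|h|$ and $(R-\varrho)$ in the two regimes $|h| \lessgtr (R-\varrho)/2$ is the step where the precise value of $\delta$ is determined.
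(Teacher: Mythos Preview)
Your proposal is correct and matches the paper's approach: the paper does not give a detailed proof but simply states that the lemma follows by ``combining Lemmas~\ref{3.1} and~\ref{3.2}'', which is exactly the chaining $B^{\alpha}_{p,\infty}\hookrightarrow B^{\beta}_{p,s}\hookrightarrow L^{p^{*}_{\beta}}$ you carry out for the global embedding. Your cutoff argument for the local estimate is the standard localization step and supplies the detail the paper omits; just be careful to take $\mathrm{supp}\,\eta\subset B_{t}$ with $t=(R+\varrho)/2$ so that for $|h|\le (R-\varrho)/2$ both $x$ and $x+h$ lie in $B_R$ whenever $\tau_h(\eta F)(x)\neq 0$.
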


Given a domain $\Omega \subset \mathbb{R}^{n}$, we say that a function $v: \mathbb{R}^n \rightarrow \mathbb{R}^m$ belongs to the local Besov space $ B^{\alpha}_{p, s,\text{loc}}$ if $\varphi  v \in B^{\alpha}_{p, s}(\mathbb{R}^{n})$ whenever $\varphi \in \mathcal{C}^{\infty}_{0}(\Omega)$. It is worth noticing that one can prove suitable version of Lemmas \ref{3.1} and \ref{3.2}, by using local Besov spaces.

We have the following lemma (we refer to \cite[Lemma 7]{baison.clop2017} for the proof).
\begin{lem}
Let $1 \le p < + \infty$, $1 \le s \le + \infty$ and $0< \alpha < 1$.
A function $v \in L^{p}_{\text{loc}}(\Omega)$ belongs to the local Besov space $B^{\alpha}_{p,s,\text{loc}}$, if, and only if,
\begin{center}
$\biggl\Vert \dfrac{\tau_{h}v}{|h|^{\alpha}} \biggr\Vert_{L^{s}\bigl(\frac{dh}{|h|^{n}};L^{p}(B)\bigr)}< + \infty,$
\end{center}
for any ball $B\subset2B\subset\Omega$ with radius $r_{B}$. Here the measure $\frac{dh}{|h|^n}$ is restricted to the ball $B(0,r_B)$ on the h-space.
\end{lem}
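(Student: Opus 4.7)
The plan is to prove both implications by combining a cutoff argument with the product rule for the difference quotient operator (Proposition \ref{rapportoincrementale}(iii)) and the observation, recorded just after the definition of Besov spaces, that integrating only over $h \in B(0,\delta)$ for a fixed $\delta > 0$ yields an equivalent norm on $B^{\alpha}_{p,s}(\mathbb{R}^{n})$. For the necessity (``only if''), fix a ball $B$ with $2B \subset \Omega$ and choose $\varphi \in \mathcal{C}^{\infty}_{0}(\Omega)$ with $\varphi \equiv 1$ on $2B$; by the definition of the local Besov space one has $\varphi v \in B^{\alpha}_{p,s}(\mathbb{R}^{n})$. For $x \in B$ and $|h| < r_{B}$, both $x$ and $x+h$ lie in $2B$, so $\varphi(x+h) = \varphi(x) = 1$ and hence $\tau_{h}(\varphi v)(x) = \tau_{h}v(x)$. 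Consequently
$$\int_{B} |\tau_{h} v(x)|^{p}\,dx \le \int_{\mathbb{R}^{n}} |\tau_{h}(\varphi v)(x)|^{p}\,dx \quad \text{for all } |h| < r_{B},$$
and taking the $L^{s}(dh/|h|^{n})$ norm, restricted to $h \in B(0,r_{B})$, of $|h|^{-\alpha}$ times the $p$-th root of the right-hand side immediately yields the required finite bound.

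For the sufficiency (``if''), fix an arbitrary $\varphi \in \mathcal{C}^{\infty}_{0}(\Omega)$ and set $K := \mathrm{supp}(\varphi)$. Cover a neighbourhood of $K$ by finitely many balls $B_{1}, \dots, B_{N}$ with $2B_{i} \subset \Omega$, and let $r_{0}>0$ be a common lower bound on their radii and on the distance from $K$ to the complement of $\bigcup_{i} B_{i}$. By Proposition \ref{rapportoincrementale}(iii) we write
$$\tau_{h}(\varphi v)(x) = \varphi(x+h)\,\tau_{h}v(x) + v(x)\,\tau_{h}\varphi(x).$$
For $|h| \le r_{0}$, the first summand is supported in $\bigcup_{i} B_{i}$, and its $L^{p}$ norm is controlled by $\|\varphi\|_{\infty}\sum_{i}\|\tau_{h}v\|_{L^{p}(B_{i})}$, so after dividing by $|h|^{\alpha}$ and integrating in $L^{s}(dh/|h|^{n})$ over $B(0,r_{0})$ we obtain a finite quantity by hypothesis. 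The second summand is bounded pointwise by $|v(x)|\|\nabla\varphi\|_{\infty}|h|$, so its contribution is a constant multiple of $\|v\|_{L^{p}(K)}$ times an $L^{s}(B(0,r_{0}),dh/|h|^{n})$ norm of $|h|^{1-\alpha}$, which is finite since $\alpha < 1$. For $|h|>r_{0}$, translation invariance gives $\|\tau_{h}(\varphi v)\|_{L^{p}(\mathbb{R}^{n})} \le 2\|\varphi v\|_{L^{p}(\mathbb{R}^{n})}$, and the weight $|h|^{-\alpha s - n}$ makes the tail integrable. Invoking the equivalent-norm observation then yields $\varphi v \in B^{\alpha}_{p,s}(\mathbb{R}^{n})$.

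The main obstacle is the bookkeeping in the sufficiency direction, specifically matching the small-$|h|$ regime, where the local seminorms of $v$ on the covering balls $B_{i}$ carry all of the information, with the large-$|h|$ regime, where only the global $L^{p}$ mass of $\varphi v$ is relevant. The equivalent-norm observation is crucial here, as it permits replacing the a priori global integration in $h$ defining the Besov seminorm by one over a ball whose radius can be chosen to match the scale $r_{0}$ of the finite covering of $\mathrm{supp}(\varphi)$; once this is in place, the two regimes glue together via the splitting above.
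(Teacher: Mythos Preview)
The paper does not supply its own proof of this lemma; it merely cites \cite[Lemma 7]{baison.clop2017}. Your argument is correct and is in fact the standard one. Two very minor remarks: in the sufficiency direction the second summand $v(x)\,\tau_{h}\varphi(x)$ is supported, for $|h|\le r_{0}$, in $(K-h)\cup K\subset K+B(0,r_{0})$ rather than in $K$ itself, so the $L^{p}$ norm of $v$ should be taken over that fixed compact neighbourhood (which is still contained in $\bigcup_{i}B_{i}\Subset\Omega$, so nothing changes); and your tail computation with weight $|h|^{-\alpha s-n}$ tacitly assumes $s<\infty$, while for $s=\infty$ the conclusion is immediate from $\sup_{|h|>r_{0}}|h|^{-\alpha}\|\tau_{h}(\varphi v)\|_{L^{p}}\le 2r_{0}^{-\alpha}\|\varphi v\|_{L^{p}}$.
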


It is known that Besov spaces of fractional order $\alpha \in (0,1)$ can be characterized in pointwise terms. We give the following definition according to \cite{koskela}.
\begin{dfn}
 Given a measurable function $v:\mathbb{R}^{n} \rightarrow \mathbb{R}^m$, a \textit{fractional $\alpha$-Hajlasz gradient for $v$} is a sequence $\{g_{k}\}_{k}$ of measurable, non-negative functions $g_{k}:\mathbb{R}^{n} \rightarrow \mathbb{R}$, together with a null set $A\subset\mathbb{R}^{n}$, such that the inequality 
\begin{center}
$|v(x)-v(y)|\leq (g_{k}(x)+g_{k}(y))|x-y|^{\alpha}$
\end{center} 
holds whenever $k \in \mathbb{Z}$ and $x,y \in \mathbb{R}^{n}\setminus A$ are such that $2^{-k} \leq|x-y|<2^{-k+1}$. We say that $\{g_{k}\}_{k} \in l^{s}(\mathbb{Z};L^{p}(\mathbb{R}^{n}))$ if
\begin{center}
$\Vert \{g_{k}\}_{k} \Vert_{l^{s}(L^{p})}=\biggl(\displaystyle\sum_{k \in \mathbb{Z}}\Vert g_{k} \Vert^{s}_{L^{p}(\mathbb{R}^{n})} \biggr)^{\frac{1}{s}}<  +\infty.$
\end{center} 
\end{dfn}
The following result was proved in \cite{koskela}.
\begin{thm}
Let $0< \alpha <1,$ $1 \leq p < +\infty$ and $1\leq s \leq +\infty $. Let $v \in L^{p}(\mathbb{R}^{n})$. One has $v \in B^{\alpha}_{p,s}(\mathbb{R}^{n})$ if, and only if, there exists a fractional $\alpha$-Hajlasz gradient $\{g_{k}\}_{k} \in l^{s}(\mathbb{Z};L^{p}(\mathbb{R}^{n}))$ for $v$. Moreover,
\begin{center}
$\Vert v \Vert_{B^{\alpha}_{p,s}(\mathbb{R}^{n})}\simeq \inf \Vert \{g_{k}\}_{k} \Vert_{l^{s}(L^{p})},$
\end{center}
where the infimum runs over all possible fractional $\alpha$-Hajlasz gradients for $v$.
\end{thm}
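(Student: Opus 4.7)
The plan is to establish the characterization as a two-sided comparison of norms: given any admissible fractional $\alpha$-Hajlasz gradient I would bound $[v]_{B^\alpha_{p,s}}$ by its $\ell^s(L^p)$-norm, and conversely from $v \in B^\alpha_{p,s}$ I would construct an explicit Hajlasz gradient and control its $\ell^s(L^p)$-norm by $\|v\|_{B^\alpha_{p,s}}$. The norm equivalence then follows by taking the infimum on the Hajlasz side.

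For the easy direction (Hajlasz gradient $\Rightarrow$ Besov), fix $h$ with $2^{-k} \le |h| < 2^{-k+1}$. The defining pointwise inequality and the convexity inequality $(a+b)^p \le 2^{p-1}(a^p+b^p)$ give
\begin{equation*}
|\tau_h v(x)|^p \le 2^{p-1}\bigl(g_k(x)^p + g_k(x+h)^p\bigr)|h|^{\alpha p},
\end{equation*}
so by translation invariance $\|\tau_h v\|_{L^p(\mathbb{R}^n)} \le C|h|^\alpha \|g_k\|_{L^p(\mathbb{R}^n)}$. Cutting the $h$-integral into dyadic annuli and using that $\int_{2^{-k}\le|h|<2^{-k+1}}\!|h|^{-n}\,dh$ is a constant independent of $k$, I would then obtain $[v]_{B^\alpha_{p,s}(\mathbb{R}^n)} \le C\|\{g_k\}\|_{\ell^s(L^p)}$ for $1 \le s < \infty$; for $s = \infty$ the estimate is immediate as one simply takes the supremum over dyadic annuli.

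The core of the proof is the converse. Given $v \in B^\alpha_{p,s}(\mathbb{R}^n)$ I would define, for each $k \in \mathbb{Z}$,
\begin{equation*}
g_k(x) := 2^{k\alpha}\Bigl(\fint_{B(x,\,c 2^{-k})}|v(y)-v(x)|^{p}\,dy\Bigr)^{1/p},
\end{equation*}
for a dimensional constant $c$ (e.g.\ $c=4$), with the natural modification if $p = \infty$. Measurability is clear. To verify the defining Hajlasz inequality, take $x,y$ with $2^{-k}\le|x-y|<2^{-k+1}$ and pick a ball $B$ of radius comparable to $2^{-k}$ containing both $x$ and $y$ and comparable to $B(x,c2^{-k}) \cap B(y,c2^{-k})$. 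Writing
\begin{equation*}
|v(x)-v(y)| \le \fint_B \bigl(|v(x)-v(z)| + |v(z)-v(y)|\bigr)\,dz,
\end{equation*}
and applying Hölder's inequality on each term, one obtains $|v(x)-v(y)|\le C(g_k(x)+g_k(y))|x-y|^\alpha$.

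The main obstacle is to show that $\{g_k\}_k \in \ell^s(\mathbb{Z};L^p(\mathbb{R}^n))$ with norm controlled by $[v]_{B^\alpha_{p,s}}$. By Fubini,
\begin{equation*}
\|g_k\|_{L^p(\mathbb{R}^n)}^{p} = 2^{k\alpha p}\int_{\mathbb{R}^n}\fint_{B(0,c2^{-k})}|\tau_z v(x)|^{p}\,dz\,dx = C\,2^{k\alpha p}\fint_{|z|<c2^{-k}}\|\tau_z v\|_{L^p(\mathbb{R}^n)}^{p}\,dz.
\end{equation*}
Passing to $\ell^s$ in $k$ and writing the inner integral over $|z|<c2^{-k}$ as a sum over dyadic annuli $2^{-j}\le|z|<2^{-j+1}$ with $j\ge k$, one is reduced to a weighted discrete Hardy-type inequality: the weights $2^{(k-j)\alpha}$ on the annulus $j\ge k$ form a geometric tail, and a standard convolution/rearrangement estimate in $\ell^s$ (Young's inequality with the geometric weight in $\ell^1$) collapses the double sum to the Besov semi-norm
\begin{equation*}
\Bigl(\sum_j \bigl(2^{j\alpha}\,\|\tau_{z_j} v\|_{L^p}\bigr)^{s}\Bigr)^{1/s},\qquad |z_j|\sim 2^{-j},
\end{equation*}
which is equivalent to $[v]_{B^\alpha_{p,s}(\mathbb{R}^n)}$ after integrating in the angular variable of $z_j$. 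This combinatorial step (Young/Hardy inequality in $\ell^s$ with a geometric kernel) is the delicate point and is needed precisely in the range $1\le s \le \infty$; for $s=\infty$ the argument degenerates to a pointwise supremum over $k$ and is essentially immediate.
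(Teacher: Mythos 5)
First, a point of context: the paper does not prove this theorem. It is stated as a citation to Koskela, Yang and Zhou \cite{koskela}, so there is no proof in the paper to compare against; what follows compares your scheme with what actually works.

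Your easy direction (a Hajlasz gradient controls the Besov seminorm) is correct. The converse, however, has a genuine gap that makes it fail outside the range $p\le s$. You define $g_k$ as the $L^p$-normalized oscillation $g_k(x) = 2^{k\alpha}\bigl(\fint_{B(x,c2^{-k})}|v(y)-v(x)|^p\,dy\bigr)^{1/p}$, and after Fubini obtain $\|g_k\|_{L^p}^p \approx 2^{k\alpha p}\fint_{|z|<c2^{-k}} A(z)^p\,dz$ with $A(z):=\|\tau_z v\|_{L^p}$. Tracking the discrete Young/Hardy step you invoke, after summing the geometric tail in $k$ one is left with quantities of the form $\bigl(\fint_{R_j}A^p\bigr)^{s/p}$ on each dyadic annulus $R_j=\{2^{-j}\le|z|<2^{-j+1}\}$, while the Besov side reduces, on the same annuli, to $\fint_{R_j}A^s$. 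The needed inequality $\bigl(\fint_{R_j}A^p\bigr)^{s/p}\lesssim \fint_{R_j}A^s$ is Jensen's inequality if $p\le s$, but it reverses when $p>s$, and the ratio $\bigl(\fint_{R_j}A^p\bigr)^{s/p}/\fint_{R_j}A^s$ is unbounded (concentrate $A$ on a thin angular sector in $R_j$: if it occupies a fraction $\varepsilon$ of the annulus, the ratio is $\varepsilon^{s/p-1}\to\infty$). So for $1\le s<p$ — which is allowed by the hypotheses $1\le p<\infty$, $1\le s\le\infty$ — the step you yourself flag as ``the delicate point'' breaks down, and no rearrangement of the Young/Hardy computation can repair it, because the $L^p$-average of $A$ over a ball is simply not dominated by its $L^s$-average when $p>s$.

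The fix is to take the $L^1$-normalized oscillation $g_k(x) = 2^{k\alpha}\fint_{B(x,c2^{-k})}|v(y)-v(x)|\,dy$ instead. The chaining argument still yields the pointwise Hajlasz inequality — in fact more directly, with no H\"older step — since for $2^{-k}\le|x-w|<2^{-k+1}$ and a ball $B\subset B(x,c2^{-k})\cap B(w,c2^{-k})$ of comparable measure one has $|v(x)-v(w)|\le\fint_B|v(x)-v(z)|\,dz+\fint_B|v(z)-v(w)|\,dz\lesssim 2^{-k\alpha}(g_k(x)+g_k(w))$. Minkowski's integral inequality then gives $\|g_k\|_{L^p}\le 2^{k\alpha}\fint_{|z|<c2^{-k}}A(z)\,dz$, and since $s\ge1$, Jensen on each annulus yields $\fint_{R_j}A\le\bigl(\fint_{R_j}A^s\bigr)^{1/s}$. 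Now the discrete Young inequality with the geometric kernel $2^{-i(n+\alpha)}$ goes through exactly as you intended, for the entire range $1\le s\le\infty$, giving $\Vert\{g_k\}_k\Vert_{l^s(L^p)}\lesssim[v]_{B^\alpha_{p,s}}$.
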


We conclude this section with the following embedding result (see \cite[Section 2.7.1, Remark 2.]{Triebel}).
\begin{lem}\label{EmB}
Let $0 < \alpha < 1$ and $  1 \le s \le + \infty$. If $p > \frac{n}{\alpha}$, then we have the continuous embedding $B^\alpha_{p,s}(\mathbb{R}^n) \subset \mathcal{C}^{0,\alpha-\frac{n}{p}}(\mathbb{R}^n)$.    
\end{lem}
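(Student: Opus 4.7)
The plan is to reduce to the endpoint case $s=\infty$ and then argue by a Campanato/Morrey-type chain-of-balls estimate based on the finite-difference definition of the Besov seminorm. The loss $\alpha-\tfrac{n}{p}$ in the Hölder exponent will be produced by passing from an $L^{p}$ oscillation on a ball of radius $\rho$ to a pointwise oscillation, and the hypothesis $p>\tfrac{n}{\alpha}$ will enter exactly at the summability of the resulting geometric series.

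First, by Lemma \ref{3.2}(a) we have the continuous inclusion $B^{\alpha}_{p,s}(\mathbb{R}^{n})\hookrightarrow B^{\alpha}_{p,\infty}(\mathbb{R}^{n})$, so it suffices to prove the embedding for $s=\infty$. Writing $[v]:=[v]_{B^{\alpha}_{p,\infty}}$, the definition gives $\|\tau_{h}v\|_{L^{p}(\mathbb{R}^{n})}\le [v]\,|h|^{\alpha}$ for every $h\in\mathbb{R}^{n}$. Next I would establish the following Campanato-type oscillation bound: for every ball $B_{\rho}\subset\mathbb{R}^{n}$,
$$\fint_{B_{\rho}}|v-(v)_{B_{\rho}}|^{p}\,dx\;\le\;c\,[v]^{p}\,\rho^{\alpha p-n}.$$
This follows from the change of variables $h=y-x$ in the double integral: bounding
$$\int_{B_{\rho}}\int_{B_{\rho}}|v(x)-v(y)|^{p}\,dx\,dy\;\le\;\int_{|h|<2\rho}\|\tau_{h}v\|_{L^{p}(\mathbb{R}^{n})}^{p}\,dh\;\le\;c\,[v]^{p}\,\rho^{\alpha p+n},$$
then dividing by $|B_{\rho}|^{2}\simeq \rho^{2n}$ and applying Jensen's inequality to compare $(v)_{B_\rho}$ with the double average.

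Finally, at any Lebesgue point $x_{0}$ of $v$ and any $r>0$, set $B_{k}:=B_{2^{-k}r}(x_{0})$. Combining the previous oscillation bound with $|(v)_{B_{k+1}}-(v)_{B_{k}}|\le\fint_{B_{k+1}}|v-(v)_{B_{k}}|$ and Jensen yields
$$|(v)_{B_{k+1}}-(v)_{B_{k}}|\;\le\;c\,[v]\,(2^{-k}r)^{\alpha-n/p}.$$
The hypothesis $p>\tfrac{n}{\alpha}$ gives $\alpha-\tfrac{n}{p}>0$, so the geometric series $\sum_{k\ge 0}2^{-k(\alpha-n/p)}$ converges, $\{(v)_{B_{k}}\}$ is Cauchy with limit $v(x_{0})$, and one obtains the pointwise-to-average bound $|v(x_{0})-(v)_{B_{r}(x_{0})}|\le c\,[v]\,r^{\alpha-n/p}$. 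Applying this at two Lebesgue points $x,y$ with $r=|x-y|$, and comparing $(v)_{B_{r}(x)}$ with $(v)_{B_{r}(y)}$ via the Campanato bound on a common enclosing ball of radius $\sim r$, one concludes $|v(x)-v(y)|\le c\,\|v\|_{B^{\alpha}_{p,\infty}}\,|x-y|^{\alpha-n/p}$, so $v$ admits a representative in $\mathcal{C}^{0,\alpha-n/p}(\mathbb{R}^{n})$ with the claimed norm control. The only mildly delicate step is the comparison of averages on balls centered at different points at the end; everything else is a direct translation of Morrey's argument into the Besov setting, with the finite-difference seminorm playing the role of the $L^{p}$ norm of the gradient.
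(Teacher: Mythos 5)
Your argument is correct and self-contained. The paper itself gives no proof of Lemma \ref{EmB}: it simply cites Triebel's monograph (Section 2.7.1, Remark 2), where the embedding is typically derived via the Fourier-analytic (Littlewood--Paley) characterization of Besov spaces. Your proposal replaces that black box with a direct argument that works entirely with the finite-difference seminorm the paper actually uses: the reduction to $s=\infty$ via Lemma \ref{3.2}(a), the change of variables $h=y-x$ giving the Campanato-type bound $\fint_{B_\rho}|v-(v)_{B_\rho}|^p\,dx\le c\,[v]^p\rho^{\alpha p-n}$, and the dyadic Morrey iteration whose convergence is exactly where $p>n/\alpha$ (i.e.\ $\alpha p-n>0$) enters. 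The steps all check out; the only point worth making explicit, since $\mathcal{C}^{0,\beta}(\mathbb{R}^n)$ carries an $L^\infty$ component, is that $|(v)_{B_1(x_0)}|\le c\|v\|_{L^p}$ combined with your pointwise-to-average bound also gives the supremum control, so the full norm estimate $\|v\|_{\mathcal{C}^{0,\alpha-n/p}}\le c\|v\|_{B^\alpha_{p,\infty}}$ follows. The trade-off is the usual one: Triebel's route generalizes immediately to the full Besov and Triebel--Lizorkin scales (including negative and higher smoothness), whereas your Campanato/Morrey argument is elementary, stays within the difference-quotient framework of Section \ref{secbesov}, and makes the mechanism producing the exponent $\alpha-n/p$ completely transparent.
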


\section{An approximation lemma}\label{AppSec}
The main tool to prove Theorem \ref{mainthm} 
is the following approximation lemma, which allows us to approximate from below the function $f$ with a sequence of functions $(f_j)$ satisfying standard $p$-growth conditions and the same regularity properties of $f$ w.r.t.\ the $x$-variable (see \cite{grimaldi.ipocoana}).
\begin{lem}\label{apprlem1}
Let $f : \Omega \times \mathbb{R}^{N \times n} \rightarrow [0,+\infty), f = f(x, \xi)$, be a Carath\'{e}odory function satisfying the Uhlenbeck structure and assumptions \eqref{F1}, \eqref{F2}, \eqref{F3}, \eqref{F4}. Then, there exists
a sequence $(f_j)$ of Carath\'{e}odory functions $f_j: \Omega \times \mathbb{R}^{N \times n} \rightarrow [0,+\infty)$, convex with respect to the last variable, monotonically convergent to $f$, such that for a.e.\ $x,y \in \Omega$ and every $\xi \in \mathbb{R}^{N \times n}$
\begin{itemize}
\item[(i)] 
$f_j(x,\xi) = \tilde{f}_j (x,|\xi|)$;
\item[(ii)]
$f_j(x,\xi)\leq f_{j+1}(x, \xi) \leq f(x,\xi)$, $\forall j \in \mathbb{N}$;
\item[(iii)]
$\langle D_{\xi\xi} f_j(x,\xi)\lambda, \lambda\rangle \geq \nu_1(1+|\xi|^2)^\frac{p-2}{2}|\lambda|^2$,
$\forall \lambda \in \mathbb{R}^{N \times n}$,
with $\nu_1=\nu_1(\nu,p)$;
\item[(iv)] there exist $L_1$, independent of $j$, and $\bar{L}_1$, depending on $j$, such that
\begin{align*}
& 1/L_1(|\xi|^p-1) \leq f_j(x,\xi)\leq L_1(1 + |\xi|^2)^\frac{q}{2},\\
&f_j (x,\xi)\leq \bar{L}_1(j)(1 + |\xi|^2)^\frac{p}{2};
\end{align*} 
\item[(v)]there exists a constant $C(j) > 0$ such that
\begin{align*}
&|D_\xi f_j(x,\xi)-D_\xi f_j(y,\xi)| \leq |x-y|^{\gamma} {(k(x)+k(y))}(1 +|\xi|^2)^\frac{q-1}{2},\\
&|D_\xi f_j(x,\xi)-D_\xi f_j(y,\xi)| \leq  C(j)|x-y|^{\gamma} {(k(x)+k(y))}(1 +|\xi|^2)^\frac{p-1}{2}.
\end{align*}
\end{itemize}
\end{lem}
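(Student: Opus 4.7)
\textbf{Plan for Lemma \ref{apprlem1}.}

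The plan is to build $(f_j)$ by an inf-convolution/Moreau-envelope procedure that replaces the $q$-growth at infinity of $\tilde f(x,t)$ by a $p$-growth penalty of rate depending on $j$, while preserving both the Uhlenbeck structure and the Besov dependence in $x$. A workable model is
$$
\tilde f_j(x,t):=\inf_{s\ge 0}\Bigl\{\tilde f(x,s)+j\,(t-s)_+^{p}\Bigr\}+\tfrac{1}{j}(1+t^2)^{p/2},
$$
with a small $\tfrac1j$-regularisation added to guarantee a uniform lower bound on the Hessian; one then sets $f_j(x,\xi):=\tilde f_j(x,|\xi|)$, so (i) is built in. Convexity in $\xi$ follows because the inf-convolution of convex functions is convex, and radial convex non-decreasing functions of $|\xi|$ are convex on $\mathbb{R}^{N\times n}$.

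Properties (ii) and (iv) are essentially consequences of standard Moreau-envelope calculus. The competitor $s=t$ in the infimum gives $\tilde f_j\le \tilde f+\tfrac1j(1+t^2)^{p/2}$, and a small adjustment of the construction (e.g.\ truncating the regularisation above the level of $\tilde f$) yields $f_j\le f$; monotonicity $f_j\le f_{j+1}$ follows because the penalty weight increases while the regularisation weight decreases, and pointwise convergence is a consequence of the continuity of $\tilde f(x,\cdot)$. For the upper bounds in (iv), the competitor $s=|\xi|$ combined with (F1) yields $f_j\le L_1(1+|\xi|^2)^{q/2}$, while $s=0$ gives $\tilde f_j(x,t)\le \tilde f(x,0)+j\,t^p+\tfrac1j(1+t^2)^{p/2}\le \bar L_1(j)(1+|\xi|^2)^{p/2}$; the uniform lower bound comes directly from the $p$-lower bound in (F1).

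For (iii), the uniform strict convexity of $p$-Laplacian type is the main analytical point. The regularisation contributes a Hessian of size $\tfrac{c}{j}(1+|\xi|^2)^{(p-2)/2}$; to get the constant $\nu_1$ independent of $j$, one borrows the missing lower convexity from (F2), using that the inf-convolution of a $p$-uniformly strictly convex function is still $p$-uniformly strictly convex with a quantitative modulus (a Lasry-Lions type computation). For (v), since the inf-convolution only involves $t=|\xi|$ and the penalty depends neither on $x$ nor on the direction of $\xi$, the minimiser $s^{\ast}=s^{\ast}(x,|\xi|)$ satisfies $D_\xi f_j(x,\xi)=D_\xi f(x,\xi^{\ast})+O(\tfrac1j)$ by the envelope theorem, with $\xi^{\ast}=s^{\ast}\,\xi/|\xi|$ and $|\xi^{\ast}|\le |\xi|$; the first inequality in (v) then follows by applying (F4) at $\xi^{\ast}$, and the second is obtained by combining the first estimate with the $p$-growth upper bound in (iv).

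The delicate point will be making (iii) and (v) compatible: the construction must be sharp enough to give a $p$-growth upper bound for $f_j$ (with constant blowing up in $j$) while simultaneously producing a uniform $p$-type strict convexity, and one must verify that $s^{\ast}(x,t)$ is measurable with enough dependence on $x$ so that (F4) for $f$ transports to the same inequality for $f_j$ with the same $k$ independent of $j$. Since this approximation scheme is spelled out in \cite{grimaldi.ipocoana}, most of the bookkeeping can be imported from there, with only the minor adjustments needed to accommodate the Uhlenbeck form and the specific shape of the $x$-regularity (F4).
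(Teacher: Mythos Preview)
The paper does not prove this lemma; it simply cites \cite{grimaldi.ipocoana}. The construction there (and in the related literature going back to Esposito--Leonetti--Mingione and Cupini--Marcellini--Mascolo) is not an inf-convolution but a \emph{truncation}: one keeps $\tilde f_j(x,t)=\tilde f(x,t)$ for $t$ below a threshold $a_j\to\infty$, and for $t>a_j$ extends tangentially so that the extension has $p$-growth. This makes (ii), (iv), and especially the first line of (v) essentially automatic, because $D_\xi f_j=D_\xi f$ on the whole region $\{|\xi|\le a_j\}$ and the extension for $|\xi|>a_j$ has an explicit form in which the $x$-dependence enters only through $D_\xi f(x,\cdot)$ evaluated at a fixed $|\xi|=a_j$.

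Your inf-convolution route is not wrong in spirit, but the sketch has real gaps at exactly the places where the truncation approach is clean. The most serious one is (v): the envelope theorem gives $D_\xi f_j(x,\xi)=D_\xi f(x,\xi^{\ast}_x)$ (modulo the regularisation), but $\xi^{\ast}_x$ and $\xi^{\ast}_y$ are in general different, so you cannot just ``apply (F4) at $\xi^{\ast}$''. You would need to split off a term $D_\xi f(y,\xi^{\ast}_x)-D_\xi f(y,\xi^{\ast}_y)$ and control $|s^{\ast}(x,t)-s^{\ast}(y,t)|$ by $|x-y|^{\gamma}(k(x)+k(y))$ via an implicit-function/stability argument for the minimiser; this is doable but it is the heart of the matter and your plan treats it as a remark. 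Note also that the first inequality in (v) is stated with \emph{no} $j$-dependent constant and the same $k$, which is immediate with truncation but would require a sharp stability estimate in your approach. A secondary issue is (ii): adding $\tfrac{1}{j}(1+t^2)^{p/2}$ breaks $f_j\le f$ (take $|\xi|$ small), and your fix ``truncate the regularisation above the level of $\tilde f$'' is vague and interacts badly with the monotonicity claim, since raising $j$ increases the penalty term but decreases the regularisation term, two effects pulling in opposite directions.

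In short: your final sentence---import the bookkeeping from \cite{grimaldi.ipocoana}---is exactly what the paper does, but the specific construction you wrote down is not the one in that reference, and making your version rigorous would require substantially more work on the $x$-stability of the minimiser than you indicate.
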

\noindent Moreover, supposing \eqref{eqf5} instead of \eqref{F4}, statement $(v)$ would change as follows:
\begin{itemize}
\item[(v)] there exists a constant $C(j) > 0$ such that
\begin{align*}
&|D_\xi f_j(x,\xi)-D_\xi f_j(y,\xi)| \leq |x-y|^{\gamma} (g_k(x)+g_k(y))(1 +|\xi|^2)^\frac{q-1}{2},\\
&|D_\xi f_j(x,\xi)-D_\xi f_j(y,\xi)| \leq  C(j)|x-y|^{\gamma} (g_k(x)+g_k(y))(1 +|\xi|^2)^\frac{p-1}{2}
\end{align*}
for a.e.\ $x,y \in \Omega$ such that $2^{-k} \text{diam}(\Omega) \leq |x-y| < 2^{-k+1}\text{diam}(\Omega)$ and for every $\xi \in \mathbb{R}^{N \times n}$.
\end{itemize}

\section{A priori estimate}\label{apriorisec}
We consider the following approximating problems
\begin{equation}\label{Papprox}
   \inf \left\{  \int_{\Omega}\left( f_j (x, Dw) - g \cdot w \right) dx \, :\,  w \in  W^{1,p}(\Omega, \mathbb{R}^N) \right\} ,
\end{equation}
where $(f_j)$ is the sequence of functions obtained applying Lemma \ref{apprlem1} to the integrand $f$ of the functional at \eqref{functional}. 
We denote by $u_j$ the minimum of the problem \eqref{Papprox}. Since $f_j$ satisfies standard $p$-growth, then $u_j$ solves the Euler-Lagrange system
\begin{equation}\label{System}
    \int_{\Omega} \langle D_{\xi}f_j (x, Du_{j}), D \varphi \rangle \, dx = \int_{\Omega} g \cdot \varphi \, dx, 
\end{equation}
for every $ \varphi \in W^{1,p}_{0}(\Omega, \mathbb{R}^N).$

The main aim of this section is to establish the following 
\begin{thm}\label{AppThm}
Let $g \in L^{m}_{loc}(\Omega, \mathbb{R}^N)$, with $p' \leq m \leq 2$. Let $u_j \in W^{1,p}(\Omega, \mathbb{R}^{N })$ be a local minimizer of \eqref{Papprox} under assumptions \eqref{F1}--\eqref{F4}, for some exponents $2\leq p  <\frac{n}{{\lambda}}< r$, where $\lambda:=  \min \{\gamma, \frac{m}{2} \}$, $p<q$ such that \eqref{gap.} is in force. 
If we a priori assume that $$V_p(Du_j) \in B^{{\lambda}}_{2,\infty,\text{loc}}(\Omega,\mathbb{R}^{N \times n}),$$ then the following estimates
\begin{align*}
    \left(  \int_{B_{R/4}}|Du_j|^\frac{np}{n-2 \beta} dx \right)^\frac{n-2 \beta}{n} & \leq c \left( \int_{B_{R}} (1 + |Du_j|^p) dx + \Vert k \Vert_{L^r (B_{R})} + \Vert g \Vert_{L^{m}(B_R)}  \right)^\sigma \qquad 0<\beta < \lambda
\end{align*}
and
\begin{align*}
   \int_{B_{R/4}} |\tau_h V_p (Du_j)|^2 dx \leq  c {|h|^{2 \lambda} }\left( \int_{B_{R}} (1 + |Du_j|^p) dx + \Vert k \Vert_{L^r (B_{R})} + \Vert g \Vert_{L^{m}(B_R)}  \right)^\sigma
\end{align*}
hold for every concentric balls $B_{R/4} \subset B_{R} \Subset \Omega$, where $c = c(n,p,q,m, \nu,l, L,R)$ and $\sigma= \sigma (n,p,q,m,\gamma)$ are positive constants independent of $j$.
\end{thm}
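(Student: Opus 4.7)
The plan is to use the finite-difference method on the Euler--Lagrange system \eqref{System}, bootstrapping through the a priori Besov regularity of $V_p(Du_j)$. Fix concentric balls $B_{R/4} \subset B_r \subset B_s \subset B_R \Subset \Omega$, a cutoff $\eta \in C_0^\infty(B_s)$ with $\eta \equiv 1$ on $B_r$ and $|D\eta| \leq c/(s-r)$, and for $|h|$ sufficiently small test \eqref{System} with $\varphi = \tau_{-h}(\eta^2 \tau_h u_j)$. Using discrete integration by parts and the Leibniz rule (Proposition~\ref{rapportoincrementale}), the resulting identity becomes
\begin{equation*}
\int \eta^2 \langle \tau_h [D_\xi f_j(\cdot, Du_j)], \tau_h Du_j\rangle\, dx = -2\int \eta \langle \tau_h [D_\xi f_j(\cdot, Du_j)], D\eta \otimes \tau_h u_j\rangle\, dx + \int g \cdot \tau_{-h}(\eta^2 \tau_h u_j)\, dx.
\end{equation*}
Next, decompose
\begin{equation*}
\tau_h [D_\xi f_j(x, Du_j)] = \bigl[D_\xi f_j(x+h, Du_j(x+h)) - D_\xi f_j(x+h, Du_j(x))\bigr] + \bigl[D_\xi f_j(x+h, Du_j(x)) - D_\xi f_j(x, Du_j(x))\bigr];
\end{equation*}
the first bracket paired with $\eta^2 \tau_h Du_j$ is bounded below by $c\, \eta^2 |\tau_h V_p(Du_j)|^2$ via \eqref{F2} and Lemma~\ref{D1}, while the second is controlled in modulus by $|h|^\gamma (k(x) + k(x+h))(1 + |Du_j|^2)^{(q-1)/2}$ thanks to \eqref{F4}.

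The resulting Caccioppoli-type estimate reads $c\int \eta^2 |\tau_h V_p(Du_j)|^2\, dx \leq (\mathrm{A}) + (\mathrm{B}) + (\mathrm{C})$, where $(\mathrm{A})$ is the \eqref{F4}-contribution, $(\mathrm{B})$ the boundary term involving $D\eta$, and $(\mathrm{C})$ the forcing term with $g$. For $(\mathrm{A})$, Young's inequality with the splitting $(q-1)/2 = (p-2)/4 + (2q-p)/4$ absorbs one factor into the left-hand side via Lemma~\ref{D1}, leaving $|h|^{2\gamma} \int (k^2 + k(\cdot+h)^2)(1 + |Du_j|^2)^{(2q-p)/2}\, dx$. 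H\"older with exponents $(r/2,\, r/(r-2))$ reduces this to controlling $|Du_j|$ in $L^{(2q-p)r/(r-2)}$, and the a priori hypothesis $V_p(Du_j) \in B^\lambda_{2,\infty,\text{loc}}$ combined with Lemma~\ref{besovembed} yields $|Du_j| \in L^{np/(n - 2\beta)}_{\text{loc}}$ for any $\beta < \lambda$. The gap condition \eqref{gap.} is precisely what ensures $(2q-p)r/(r-2) \leq np/(n-2\beta)$ for $\beta$ close to $\lambda$, and $|h|^{2\gamma} \leq |h|^{2\lambda}$ because $\lambda \leq \gamma$. The boundary term $(\mathrm{B})$ is treated by an analogous Young--H\"older scheme using \eqref{F3} and \eqref{F4} together with $|\tau_h u_j| \leq c|h|(|Du_j(\cdot+h)| + |Du_j|)$ and Lemma~\ref{ldiff}.

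The main difficulty is the forcing term $(\mathrm{C})$, since $g$ possesses no fractional differentiability. The idea is to apply H\"older with exponents $(m, m')$ and decompose via the Leibniz formula (Proposition~\ref{rapportoincrementale}(iii))
\begin{equation*}
\tau_{-h}(\eta^2 \tau_h u_j)(x) = \eta^2(x-h)\, \tau_{-h}(\tau_h u_j)(x) + \tau_h u_j(x)\, \tau_{-h}(\eta^2)(x);
\end{equation*}
the second summand yields the harmless factor $|h|^2 \|Du_j\|_{L^{m'}}$. For the first summand, exploit that $\tau_{-h}(\tau_h u_j)$ is a genuine second-order difference: applying $\|\tau_{-h} F\|_{L^{m'}} \leq c|h| \|DF\|_{L^{m'}}$ with $F = \tau_h u_j$ gives $c|h| \|\tau_h Du_j\|_{L^{m'}}$. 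Since $p \geq 2$, Lemma~\ref{D1} forces $|\tau_h Du_j| \leq c|\tau_h V_p(Du_j)|$ pointwise, so the a priori hypothesis yields $\|\tau_h Du_j\|_{L^2} \leq c|h|^\lambda$. Interpolating $L^{m'}$ between $L^2$ and the higher-integrability Lebesgue space $L^{np/(n-2\beta)}$ from the previous paragraph extracts a factor $|h|^{1 + \theta \lambda}$, and the constraint $\lambda \leq m/2$ built into the definition $\lambda = \min\{\gamma, m/2\}$ is exactly what ensures $1 + \theta \lambda \geq 2\lambda$. Finally, taking Young's parameter $\varepsilon$ small enough to absorb all the $\varepsilon \int \eta^2 |\tau_h V_p(Du_j)|^2$ contributions into the left-hand side, and iterating over the radii $r < s$ through Lemma~\ref{lm2} to eliminate negative powers of $s-r$, yields the second (Besov) estimate of the theorem. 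The first (higher integrability) estimate then follows from Lemma~\ref{besovembed} applied to this Besov estimate.
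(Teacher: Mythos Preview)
Your overall architecture matches the paper's: test \eqref{System} with $\tau_{-h}(\eta^2\tau_h u_j)$, split $\tau_h D_\xi f_j$ via \eqref{F2}--\eqref{F4}, absorb the small pieces, and close by iterating Lemma~\ref{lm2}. The handling of the $(\mathrm{A})$ and $(\mathrm{B})$ terms is essentially what the paper does (their $A_1$--$A_4$).

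The gap is in $(\mathrm{C})$. When you write ``the a priori hypothesis yields $\|\tau_h Du_j\|_{L^2}\le c|h|^\lambda$'', that $c$ is the Besov seminorm of $V_p(Du_j)$ on the larger ball --- exactly the quantity you are trying to bound, not a structural constant. So the estimate becomes circular. Moreover, your claim that $\lambda\le m/2$ ``is exactly what ensures $1+\theta\lambda\ge 2\lambda$'' is false in the relevant range: with $\lambda=m/2$ this requires $\theta\ge 2/m'$, but the interpolation $\frac{1}{m'}=\frac{\theta}{2}+\frac{1-\theta}{np/(n-2\beta)}$ gives $\theta<2/m'$ whenever $m'>2$ (equivalently $m<2$, $p>2$). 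So for $p>2$ and $m<2$ the exponent bookkeeping breaks.

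The paper avoids all of this. After H\"older with $(m,m')$ and one use of Lemma~\ref{ldiff} on $\tau_{-h}$, it applies Young's inequality with exponents $(m,m')$ directly to $|h|\,\|g\|_{L^m}\cdot \big(\int \eta^{2m'}|\tau_h Du_j|^{m'}\big)^{1/m'}$, producing $c_\varepsilon|h|^m\|g\|_{L^m}^m$ (and $|h|^m\le |h|^{2\lambda}$ precisely because $\lambda\le m/2$) plus $\varepsilon\int \eta^{2m'}|\tau_h Du_j|^{m'}$. Since $2\le m'\le p$, the latter is dominated by $\varepsilon\int\eta^2(1+|Du_j(x)|^2+|Du_j(x+h)|^2)^{(p-2)/2}|\tau_h Du_j|^2$ and is absorbed into the left-hand side. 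No interpolation and no quantitative use of the a priori Besov hypothesis is needed for this term; the hypothesis is used only to guarantee finiteness of the higher integrability norms appearing elsewhere.

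Two smaller points. The purpose of Lemma~\ref{lm2} here is not ``to eliminate negative powers of $s-r$'' but to absorb the $\varepsilon\Phi(t')$ term coming from the higher-integrability norm on the larger ball; the powers of $s-r$ survive as powers of $R$. Also, the paper obtains the higher integrability estimate first (iterating on $\Phi(\rho)=\big(\int_{B_\rho}|Du_j|^{np/(n-2\beta)}\big)^{(n-2\beta)/n}$ via Lemma~\ref{EmbendMigliore}) and then reads off the Besov estimate, rather than the reverse.
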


\begin{proof}
 In the following, we write $u$ instead of $u_j$ for simplicity of notation.
For further needs, we notice that from the hypothesis
 $$V_p(Du) \in B^\lambda_{2,\infty,{loc}}(\Omega,\mathbb{R}^{N \times n})$$
and Lemma \ref{besovembed}, we have
\begin{equation}\label{H-1}
    Du \in L^{\frac{np}{n-2 \beta}}_{loc}(\Omega ,\mathbb{R}^{N \times n}),
\end{equation}
for all $0< \beta < \lambda$. One can easily check that \eqref{gap.} yields
\begin{equation}\label{dis}
 \dfrac{r(2q-p)}{r-2} \leq \dfrac{np}{n-2 \beta} , 
\end{equation}
for every $\beta \in (\frac{\lambda n r}{nr+2(\lambda r -n)}, \lambda)$.
 
Fix a ball $B_R \Subset \Omega$ and consider radii $\frac{R}{4}<s<t<t'<R$, a cut-off function $\eta \in C_0^{\infty}(B_t)$, with $\eta=1$ on $B_{s}$, $0 \leq \eta \leq 1$, $|D \eta | \leq \frac{c}{t-s}$  and $|h|\leq \frac{t' - t}{2}$. We test \eqref{System} with the function
$$\varphi = \tau_{-h} \left(  \eta^2 \tau_h u \right) $$
thus obtaining 
\begin{equation*}
    \int_{\Omega} \langle D_{\xi}f_j (x, Du), \tau_{-h} D  \left(  \eta^2 \tau_h u \right) \rangle \, dx = \int_{\Omega} g \cdot \tau_{-h}  \left(  \eta^2 \tau_h u \right) \, dx, 
\end{equation*}
hence by $(ii)$ of Proposition \ref{rapportoincrementale}
\begin{equation}\label{IntEq}
    \int_{\Omega} \langle \tau_{h} D_{\xi}f_j (x, Du), D  \left(  \eta^2 \tau_h u \right) \rangle \, dx = \int_{\Omega}  g \cdot \tau_{-h} \left(  \eta^2 \tau_h u \right) \, dx =: B.
\end{equation}
Now we rewrite the integral in the left-hand side of the previous equality as follows
\begin{align}
    \int_{\Omega}& \langle D_{\xi}f_j (x+h, Du(x+h))- D_{\xi}f_j (x, Du(x)) , 2 \eta D \eta \tau_h u \rangle \, dx \notag \\
    &+ \int_{\Omega} \langle D_{\xi}f_j (x+h, Du(x+h))- D_{\xi}f_j (x, Du(x)) ,  \eta^2 \tau_h Du \rangle \, dx \notag \\
    &=  \int_{\Omega} \langle D_{\xi}f_j (x+h, Du(x+h))- D_{\xi}f_j (x+h, Du(x)) , 2 \eta D \eta \tau_h u \rangle \, dx \notag \\
    &+  \int_{\Omega} \langle D_{\xi}f_j (x+h, Du(x))- D_{\xi}f_j (x, Du(x)) , 2 \eta D \eta \tau_h u \rangle \, dx \notag \\
    &+  \int_{\Omega} \langle D_{\xi}f_j (x+h, Du(x+h))- D_{\xi}f_j (x+h, Du(x)) , \eta^2 \tau_h Du \rangle \, dx \notag \\
    &+  \int_{\Omega} \langle D_{\xi}f_j (x+h, Du(x))- D_{\xi}f_j (x, Du(x)) , \eta^2 \tau_h Du \rangle \, dx \notag \\
    & =: A_1 + A_2 + A_3 + A_4.
\end{align}
So we have from equation \eqref{IntEq}
\begin{equation}\label{SommaInt}
   A_3 \leq |A_1|  + |A_2| + |A_4| + |B| .
\end{equation}
We start by considering the term $|A_1|$. By using hypotesis \eqref{F3}, Young's and H\"older's equalities, we get
\begin{align}\label{A1}
    |A_1| &= \left| \int_{\Omega} \sum_{i, \alpha} \left( f_{\xi_{i}^{\alpha}}(x+h, Du(x+h))-f_{\xi_{i}^{\alpha}}(x+h, Du(x))\right) 2 \eta \, \eta_{x_i} \tau_h u^{\alpha}dx \right| \notag \\
    & \leq 2\int_{\Omega} \eta \, |D\eta| \left( 1 + |Du(x+h)|^2 + |Du(x)^2| \right)^{\frac{q-2}{2}} \, |\tau_h
Du| \, |\tau_h u| dx \notag \\
& \leq \varepsilon \int_{\Omega} \eta^2 \, \left( 1 + |Du(x+h)|^2 + |Du(x)^2| \right)^{\frac{p-2}{2}} \, |\tau_h
Du|^2 dx  \notag \\
& \qquad + c_\varepsilon \int_{\Omega}  |D \eta|^2 \left( 1 + |Du(x+h)|^2 + |Du(x)^2| \right)^{\frac{2q-p-2}{2}} \, |\tau_h u|^2 dx\notag \\
& \leq \varepsilon \int_{\Omega} \eta^2 \, \left( 1 + |Du(x+h)|^2 + |Du(x)^2| \right)^{\frac{p-2}{2}} \, |\tau_h
Du|^2 dx  \notag \\
& \qquad + \frac{c_\varepsilon}{(t-s)^2} \left( \int_{B_{t'}}(1+ |Du|^2)^{\frac{2q-p}{2}} dx \right)^{\frac{2q-p-2}{2q-p}} \, \left( \int_{B_{t}} |\tau_h u|^{2q-p} dx \right)^{\frac{2}{2q-p}},
\end{align}
where in the last inequality we used that $|D \eta| \leq \frac{c}{t-s}$. Next applying Lemma \ref{ldiff}, we have
\begin{align}
 |A_1| & \leq  \varepsilon \int_{\Omega} \eta^2 \, \left( 1 + |Du(x+h)|^2 + |Du(x)^2| \right)^{\frac{p-2}{2}} \, |\tau_h
Du|^2 dx   \notag \\
& \qquad + \frac{c_\varepsilon}{(t-s)^2} \, |h|^2 \, \left( \int_{B_{t'}}(1+ |Du|)^{2q-p} dx \right)^{\frac{2q-p-2}{2q-p}} \, \left( \int_{B_{t'}} |Du|^{2q-p}dx \right)^{\frac{2}{2q-p}} \notag \\
& \leq \varepsilon \int_{\Omega} \eta^2 \, \left( 1 + |Du(x+h)|^2 + |Du(x)^2| \right)^{\frac{p-2}{2}} \, |\tau_h
Du|^2  dx  \notag \\
& \qquad + \frac{c_\varepsilon}{(t-s)^2} \, |h|^2 \, \int_{B_{t'}}(1+ |Du|)^{2q-p} dx ,\notag 
\end{align}
where the last integral is finite by virtue of \eqref{H-1} and \eqref{dis}.

The ellipticity assumption \eqref{F2} implies
\begin{align}\label{A3}
    A_3 \geq L \int_{\Omega} \eta^2 \, \left( 1 + |Du(x+h)|^2 + |Du(x)^2| \right)^{\frac{p-2}{2}} \, |\tau_h
Du|^2 dx.
\end{align}
From condition \eqref{F4}, Lemma \ref{ldiff}, the properties of $\eta$ and H\"older's inequality, we derive
\begin{align}\label{A2}
    |A_2| &= \left| \int_{\Omega} \sum_{i, \alpha} \left( f_{\xi_{i}^{\alpha}}(x+h, Du(x))-f_{\xi_{i}^{\alpha}}(x, Du(x)) \right)2 \eta \, \eta_{x_i} \tau_h u^{\alpha} dx \right| \notag \\
    & \leq 2 \int_{\Omega} \eta |D \eta||\tau_h u|  |h|^{\gamma}(k(x+h)+ k(x))(1 + |Du|^2)^{\frac{q-1}{2}} dx \notag \\
    & \leq \frac{c}{t-s}|h|^{\gamma}  \int_{B_t}   |\tau_h u| (k(x+h)+ k(x))(1 + |Du|^2)^{\frac{q-1}{2}} dx \notag \\
      & \leq  \frac{c}{t-s} \, |h|^{\gamma} \, \left( \int_{B_{R}} k^{r} dx \right)^{\frac{1}{r}} \, \left( \int_{B_{t}}|\tau_h u|^{\frac{r-1}{r}} (1 +|Du|)^{\frac{r(q-1)}{r-1}} dx \right)^{\frac{r-1}{r}} \notag \\
        & \leq  \frac{c}{t-s} \, |h|^{\gamma} \, \left( \int_{B_{R}} k^{r} dx \right)^{\frac{1}{r}} \, \left( \int_{B_{t}}|\tau_h u|^{\frac{rq}{r-1}}dx\right)^{\frac{r-1}{rq}} \left( \int_{B_t} (1 +|Du|)^{\frac{r(q-1)}{r-1}} dx \right)^{\frac{r-1}{r}} \notag \\
        &\leq \frac{c}{t-s} \, |h|^{1+ \gamma} \, \left( \int_{B_{R}} k^{r} dx \right)^{\frac{1}{r}} \, \left( \int_{B_{t'}} (1 +|Du|)^{\frac{rq}{r-1}} dx \right)^{\frac{r-1}{r}},
\end{align}
where we also used that $Du \in L^{\frac{rq}{r-1}}_{loc}(\Omega)$ by virtue of \eqref{H-1} and \eqref{dis}.

Similarly as for the estimate of $A_2$, we obtain
\begin{align}\label{A4}
     |A_4| &= \left| \int_{\Omega} \sum_{i, \alpha} \left( f_{\xi_{i}^{\alpha}}(x+h, Du(x))-f_{\xi_{i}^{\alpha}}(x, Du(x)) \right) \eta^2 \, \tau_h u_{x_i}^{\alpha} dx \right| \notag \\
    & \leq c |h|^{\gamma}  \int_{\Omega} \eta^2 |\tau_h Du|  (k(x+h)+ k(x))( 1 + |Du|^2)^{\frac{q-1}{2}} dx \notag \\
    & \leq \varepsilon \int_{\Omega} \eta^2 \, \left( 1 + |Du(x+h)|^2 + |Du(x)^2| \right)^{\frac{p-2}{2}} \, |\tau_h Du|^2 dx \notag \\
    & \qquad + c_{\varepsilon} \int_{\Omega} \eta ^2   (k(x+h)+ k(x))^2 ( 1 + |Du|^2)^{\frac{2q-p}{2}} dx \notag \\
     & \leq \varepsilon \int_{\Omega} \eta^2 \, \left( 1 + |Du(x+h)|^2 + |Du(x)^2| \right)^{\frac{p-2}{2}} \, |\tau_h Du|^2 dx \notag \\
    & \qquad + c_{\varepsilon} |h|^{2 \gamma}   \left( \int_{B_{R}} k^{r} dx \right)^{\frac{2}{r}} \, \left( \int_{B_{t}} (1 +|Du|)^{\frac{r(2q-p)}{r-2}} dx \right)^{\frac{r-2}{r}} ,
\end{align}
where the last integral is finite by virtue of \eqref{H-1} and \eqref{dis}.

Now, we take care of $B$. Recalling that $2 \le m \le p'$, the H\"older's inequality and Lemma \ref{ldiff} give that
\begin{align}
    \left| B\right| 
    & \le \left( \int_{B_t} |g|^{m}dx \right)^\frac{1}{m} \left( \int_{B_t} |\tau_{-h}(\eta^2 \tau_h u)|^{m'}\right)^\frac{1}{m'} \notag\\
    &\leq c|h|\, \left( \int_{B_R} |g|^{m}dx \right)^\frac{1}{m}\,\left( \int_{B_{t}} |D(\eta^2 \tau_h u)|^{m'} dx \right)^{\frac{1}{m'}} \notag \\
    &\leq c|h|\, \left( \int_{B_R} |g|^{m}dx \right)^\frac{1}{m}\,\left( \int_{B_{t}} \eta^p |D \eta|^{m'} |\tau_h u|^{m'} dx \right)^{\frac{1}{m'}} \notag \\
    & \ \ \ \ +  c|h|\, \left( \int_{B_R} |g|^{m}dx \right)^\frac{1}{m}\,\left( \int_{B_{t}} \eta^{2m'}  |\tau_h Du|^{m'} dx \right)^{\frac{1}{m'}}. \notag 
\end{align}
Using again Lemma \ref{ldiff}, Young's inequality and that $|D \eta | \le \frac{c}{t-s}$, $0 \le \eta \le 1$ and  $2 \leq m' \leq p$, we have
\begin{align}\label{B}
    |B|  &\leq \dfrac{c}{t-s}|h|^2\, \left( \int_{B_R} |g|^{m}dx \right)^\frac{1}{m}\,\left( \int_{B_{R}}  |D u|^{m'} dx \right)^{\frac{1}{m'}} \notag \\
    & \ \ \ \ +  c_\varepsilon \ |h|^{m}\, \int_{B_R} |g|^{m}dx \notag\\
    & \ \ \ \ + \varepsilon\int_{\Omega} \eta^{2}  \left( 1+ |Du(x+h)|^2+|Du(x)|^2\right)^\frac{p-2}{2}|\tau_h Du|^2 dx .
\end{align}
Inserting \eqref{A1}, \eqref{A2}, \eqref{A3}, \eqref{A4} and \eqref{B} in \eqref{SommaInt}, we infer
\begin{align}
 L  & \int_{\Omega} \eta^2 \, \left( 1 + |Du(x+h)|^2 + |Du(x)^2| \right)^{\frac{p-2}{2}} \, |\tau_h
Du|^2 dx \notag \\
& \qquad \leq  3 \varepsilon \int_{\Omega} \eta^2 \, \left( 1  + |Du(x+h)|^2 + |Du(x)^2| \right)^{\frac{p-2}{2}} \, |\tau_h
Du|^2 dx \notag \\
& \qquad +\frac{c_\varepsilon}{(t-s)^2} |h|^2 \int_{B_{t'}}(1 + |Du|)^{2q-p} dx \notag \\
& \qquad + \frac{c}{t-s} \, |h|^{1+ \gamma} \, \left( \int_{B_{R}} k^{r} dx \right)^{\frac{1}{r}} \, \left( \int_{B_{t'}} (1 +|Du|)^{\frac{rq}{r-1}} dx \right)^{\frac{r-1}{r}} \notag \\
&  \qquad +c_\varepsilon \, |h|^{2 \gamma} \, \left( \int_{B_{R}} k^{r} dx \right)^{\frac{2}{r}} \, \left( \int_{B_{t'}} (1 +|Du|)^{\frac{r(2q-p)}{r-2}} dx \right)^{\frac{r-2}{r}} \notag \\
& \qquad + \dfrac{c}{t-s}|h|^2\, \left( \int_{B_R} |g|^{m}dx \right)^\frac{1}{m}\,\left( \int_{B_{R}}  |D u|^p dx \right)^{\frac{1}{p}} \notag \\
    & \qquad  +  c_\varepsilon \ |h|^{m}\, \int_{B_R} |g|^{m}dx. \notag
\end{align}
Choosing $\varepsilon = \frac{L}{6}$ and reabsorbing the first integral in the right-hand side by the left-hand side, we get
\begin{align}
    \int_{\Omega} \eta^2 \,&  \left( 1 + |Du(x+h)|^2 + |Du(x)^2| \right)^{\frac{p-2}{2}} \, |\tau_h Du|^2 dx \notag \\
&\leq \frac{c}{(t-s)^2} |h|^2 \int_{B_{t'}}(1 + |Du|)^{2q-p} dx \notag \\
& \, + \frac{c}{t-s} \, |h|^{1+ \gamma} \, \left( \int_{B_{R}} k^{r} dx \right)^{\frac{1}{r}} \, \left( \int_{B_{t'}} (1 +|Du|)^{\frac{rq}{r-1}} dx \right)^{\frac{r-1}{r}} \notag \\
&  \, +c \, |h|^{2 \gamma} \, \left( \int_{B_{R}} k^{r} dx \right)^{\frac{2}{r}} \, \left( \int_{B_{t'}} (1 +|Du|)^{\frac{r(2q-p)}{r-2}} dx \right)^{\frac{r-2}{r}} \notag \\
& \ + \dfrac{c}{t-s}|h|^2\, \left( \int_{B_R} |g|^{m}dx \right)^\frac{1}{m}\,\left( \int_{B_{R}}  |D u|^p dx \right)^{\frac{1}{p}} \notag \\
    & \ +  c \ |h|^{m}\, \int_{B_R} |g|^{m}dx .\notag
\end{align}
{Since $q >p$ and $r > 2$}, it holds
$$2q-p \leq \frac{r(2q-p)}{r-2} \quad \text{and} \quad \frac{rq}{r-1} \leq \frac{r(2q-p)}{r-2}.$$
Therefore, we have
\begin{align}
    \int_{\Omega} \eta^2 \, & \left( 1 + |Du(x+h)|^2 + |Du(x)^2| \right)^{\frac{p-2}{2}} \, |\tau_h Du|^2 dx \notag \\
&\leq \frac{c}{(t-s)^2} |h|^2 \left( \int_{B_{t'}}(1 + |Du|)^\frac{r(2q-p)}{r-2} dx  \right)^\frac{r-2}{r}\notag \\
& \, + \frac{c}{t-s} \, |h|^{1+ \gamma} \, \left( \int_{B_{R}} k^{r} dx \right)^{\frac{1}{r}} \, \left( \int_{B_{t'}} (1 +|Du|)^{\frac{r(2q-p)}{r-2}} dx \right)^{\frac{q(r-2)}{(2q-p)r}} \notag \\
&  \, +c \, |h|^{2 \gamma} \, \left( \int_{B_{R}} k^{r} dx \right)^{\frac{2}{r}} \, \left( \int_{B_{t'}} (1 +|Du|)^{\frac{r(2q-p)}{r-2}} dx \right)^{\frac{r-2}{r}} \notag \\
& \ + \dfrac{c}{t-s}|h|^2\, \left( \int_{B_R} |g|^{m}dx \right)^\frac{1}{m}\,\left( \int_{B_{R}}  |D u|^p dx \right)^{\frac{1}{p}} \notag \\
    & \  +  c \ |h|^{m}\, \int_{B_R} |g|^{m}dx .\label{Stima1}
\end{align}
We now consider the interpolation inequality
\begin{align}\label{interp1}
\left( \int_{B_{\rho}} |h|^\frac{r(2q-p)}{r-2}dx \right)^{\frac{r-2}{r(2q-p)}} \leq \left( \int_{B_\rho} |h|^p dx \right)^\frac{\pi}{p}  \, \, \left( \int_{B_\rho} |h|^\frac{np}{n-\beta p} dx \right)^{\frac{(n-2 \beta)(1-\pi)}{np}},
\end{align}
where $0< \pi <1$ is defined by
\begin{align}
\dfrac{r-2}{r(2q-p)}= \dfrac{\pi}{p}+ \dfrac{(1-\pi)(n-2 \beta)}{np} \notag
\end{align}
which implies
$$\pi= \dfrac{nr(p-q)-np+\beta r(2q-p)}{\beta r(2q-p)}, \ \ \ 1- \pi= \dfrac{n[r(q-p)+p]}{\beta r(2q-p)}.$$
Applying Lemma \ref{D1} in the left-hand side of \eqref{Stima1} and the interpolation inequality \eqref{interp1} in the right-hand side, we get

\begin{align}
       \int_{B_{t}}\eta ^2 |\tau_h V_p (Du)|^2 dx & \leq 
 \frac{c}{(t-s)^2} |h|^2 \left( \int_{B_{t'}}(1 + |Du|)^p dx \right)^{\frac{(2q-p) \pi}{p}} \, \left( \int_{B_{t'}}(1 + |Du|)^{\frac{np}{n-2 \beta}} dx \right)^{\frac{n-2 \beta}{np}(2q-p)(1- \pi)}  \notag \\
& \, +  \frac{c}{t-s} \, |h|^{1+ \gamma} \, \left( \int_{B_{R}} k^{r} dx \right)^{\frac{1}{r}} \, \left( \int_{B_{t'}} (1 +|Du|)^p dx \right)^{\frac{\pi q}{p}} \cdot \notag \\
& \qquad \qquad \cdot \left( \int_{B_{t'}}(1 + |Du|)^{\frac{np}{n-2 \beta}} dx \right)^{\frac{n-2 \beta}{np}q(1- \pi)} \notag \\
& \, + c \, |h|^{2 \gamma} \, \left( \int_{B_{R}} k^{r} dx \right)^{\frac{2}{r}} \, \left( \int_{B_{t'}} (1 +|Du|)^p dx \right)^{\frac{\pi(2 q-p)}{p}} \cdot \notag \\
& \qquad \qquad \cdot \left( \int_{B_{t'}}(1 + |Du|)^{\frac{np}{n-2 \beta}} dx \right)^{\frac{n-2 \beta}{np}(2 q- p)} \notag \\
& \ + \dfrac{c}{t-s}|h|^2\, \left( \int_{B_R} |g|^{m}dx \right)^\frac{1}{m}\,\left( \int_{B_{R}}  |D u|^p dx \right)^{\frac{1}{p}} \notag \\
    & \  +  c \ |h|^{m}\, \int_{B_R} |g|^{m}dx \notag  .
\end{align}
Assuming $\tilde{q}$ as the conjugate of $ \frac{p}{q(1-\pi)}$, $\tilde{p}$ as the conjugate of $\frac{p}{(2q-p)(1- \pi)}$ and applying Young's inequality
\begin{align}
     \int_{B_{t}} |\tau_h V_p (Du)|^2 dx & \leq \, \frac{\varepsilon}{3}|h|^2 \left(  \int_{B_{t'}}(1+ |Du|)^{\frac{np}{n-2 \beta}} dx \right)^{\frac{n-2 \beta}{n}} \notag \\
     & \qquad +   \frac{c_\varepsilon}{(t-s)^{2\tilde{p}}}|h|^2 \left(  \int_{B_{2R}}(1+ |Du|)^p dx\right)^{\frac{(2q-p)\pi}{p}\tilde{p}} \notag \\
     & \qquad +  \frac{\varepsilon}{3}|h|^{1+ \gamma} \left(  \int_{B_{t'}}(1+ |Du|)^\frac{np}{n-2 \beta} dx\right)^\frac{n-2 \beta}{n} \notag \\
    & \qquad +  \frac{c_\varepsilon}{R^{\tilde{q}}}|h|^{1 + \gamma}\left(  \int_{B_{R}} k^r dx \right)^{\frac{ \tilde{q}}{r}} \left(  \int_{B_{t'}}(1+ |Du|)^p dx \right)^{\frac{q \pi \tilde{q}}{p}}  \notag \\
     & \qquad +   \frac{\varepsilon}{3}|h|^{2\gamma} \left(  \int_{B_{t'}}(1+ |Du|)^\frac{np}{n-2 \beta} dx \right)^\frac{n-2 \beta}{n} \notag \\
    & \qquad + c_\varepsilon|h|^{2\gamma}\left(  \int_{B_{R}} k^r dx \right)^{\frac{ 2\tilde{q}}{r}} \left(  \int_{B_{t'}}(1+ |Du|)^p dx \right)^{\frac{(2q-p) \pi \tilde{q}}{p}}  \notag \\
    & \qquad + \dfrac{c}{t-s}|h|^2\, \left( \int_{B_R} |g|^{m}dx \right)^\frac{1}{m}\,\left( \int_{B_{R}}  |D u|^p dx \right)^{\frac{1}{p}} \notag \\
    & \qquad  +  c \ |h|^{m}\, \int_{B_R} |g|^{m}dx \notag \\
     & \leq |h|^{2 \lambda} \Biggl\{ \varepsilon \left(  \int_{B_{t'}}(1+ |Du|)^\frac{np}{n-2 \beta} dx \right)^\frac{n-2 \beta}{n} \notag +  \frac{c_\varepsilon}{R^{2\tilde{p}}} \left(  \int_{B_{t'}}(1+ |Du|)^p dx\right)^{\frac{(2q-p)\pi}{p}\tilde{p}} \notag \\
     & \qquad + \frac{c_\varepsilon}{(t-s)^{\tilde{q}}}\left(  \int_{B_{R}} k^r dx \right)^{\frac{ \tilde{q}}{r}} \left(  \int_{B_{t'}}(1+ |Du|)^p dx \right)^{\frac{q \pi \tilde{q}}{p}}  \notag \\
     & \qquad +   c_\varepsilon \left(  \int_{B_{R}} k^r dx \right)^{\frac{ 2 \tilde{q}}{r}} \left(  \int_{B_{t'}}(1+ |Du|)^p dx \right)^{\frac{(2q-p) \pi \tilde{q}}{p}}  \notag \\
     & \qquad  + \dfrac{c}{t-s}\, \left( \int_{B_R} |g|^{m}dx \right)^\frac{1}{m}\,\left( \int_{B_{R}}  |D u|^p dx \right)^{\frac{1}{p}} \notag \\
    & \qquad  +  c \, \int_{B_R} |g|^{m}dx \Biggr\}.\notag
    \end{align}
    where in the last inequality we used the fact that $|h| \leq 1$ and $\lambda:=  \min \{\gamma, \frac{m}{2} \}$.\\
 From Lemma \ref{EmbendMigliore} it follows that
 \begin{align}
\left(  \int_{B_{s}}|Du|^\frac{np}{n-2 \beta} dx \right)^\frac{n-2 \beta}{n} & \leq \varepsilon \left(  \int_{B_{t'}}(1+ |Du|)^\frac{np}{n-2 \beta} dx \right)^\frac{n-2 \beta}{n} \notag +  \frac{c_\varepsilon}{(t-s)^{2\tilde{p}}} \left(  \int_{B_{t'}}(1+ |Du|)^p dx\right)^{\frac{(2q-p)\pi}{p} \tilde{p}} \notag \\
     & \qquad + \frac{c_\varepsilon}{(t-s)^{\tilde{q}}}\left(  \int_{B_{R}} k^r dx \right)^{\frac{ \tilde{q}}{r}} \left(  \int_{B_{t'}}(1+ |Du|)^p dx \right)^{\frac{q \pi \tilde{q}}{p}}  \notag \\
     & \qquad +   c_\varepsilon \left(  \int_{B_{R}} k^r dx \right)^{\frac{ 2 \tilde{q}}{r}} \left(  \int_{B_{t'}}(1+ |Du|)^p dx \right)^{\frac{(2q-p) \pi \tilde{q}}{p}}  \notag \\
     & \qquad  + \dfrac{c}{t-s}\, \left( \int_{B_R} |g|^{p'}dx \right)^\frac{1}{p'}\,\left( \int_{B_{R}}  |D u|^p dx \right)^{\frac{1}{p}} \notag \\
    & \qquad  +  c \ \int_{B_R} |g|^{m}dx. \notag
 \end{align}
Applying Lemma \ref{lm2}
\begin{align*}
    \left(  \int_{B_{R/4}}|Du|^\frac{np}{n-2 \beta} dx \right)^\frac{n-2 \beta}{n} & \leq c \left( \int_{B_{R   }} (1 + |Du|^p) dx + \Vert k \Vert_{L^r (B_{R})} + \Vert g \Vert_{L^{m}(B_R)}\right)^\sigma 
\end{align*}
which is finite since $u \in W^{1,p}_{loc}(\Omega, \mathbb{R}^N)$, $k \in L^r(\Omega)$ and $g \in L^{m}_{loc}(\Omega, \mathbb{R}^N)$ and so
\begin{align}
   \int_{B_{R/4}} |\tau_h V_p (Du)|^2 dx \leq  c |h|^{2 \lambda} \left( \int_{B_{R}} (1 + |Du|^p) dx + \Vert k \Vert_{L^r (B_{R})} +  \Vert g \Vert_{L^{m}(B_R)} \right)^\sigma, \label{StimaTeo1}
\end{align}
for positive constants $c = c(n,p,q,m, \nu,l, L,R)$ and $\sigma= \sigma (n,p,q,m,\gamma)$ independent of $j$.
\end{proof}
\section{Proof of Theorem \ref{mainthm}}\label{mainthmsec}
In the case of $p$-growth conditions, i.e.\ when $p=q$, we have the following higher differentiability result.
\begin{thm}\label{ReThm}
Let  $g \in L^{m}_{loc}(\Omega, \mathbb{R}^N)$, with $p' \leq m \leq 2$. Moreover, assume that $f$ satisfies \eqref{F1}--\eqref{F4} for exponents $2\leq p =q <\frac{n}{{\lambda}}$,
where $\lambda:=  \min \{\gamma, \frac{m}{2} \}$, and for a function $k \in L^\infty_{loc}(\Omega)$.  
If $u\in W^{1,p}_{loc}(\Omega, \mathbb{R}^N)$ is a local minimizer of \eqref{functional}, then $V_p(Du)  \in B^\lambda_{2,\infty,\textrm{\text{loc}}}(\Omega, \mathbb{R}^{N \times n})$.   
\end{thm}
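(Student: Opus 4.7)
The plan is to argue directly on the Euler--Lagrange system for $u$, without the a priori higher integrability assumption that was needed in Theorem \ref{AppThm}. The point is that when $p=q$, the exponents $2q-p$, $q-1$, $q-2$ appearing in the right-hand side of \eqref{F3}--\eqref{F4} collapse to $p$, $p-1$, $p-2$, so every integral is already controlled by $\int(1+|Du|)^p\,dx$; no bootstrap through Lemma \ref{besovembed} is required, and the assumption $k \in L^\infty_{\text{loc}}$ makes the $x$-regularity terms immediately tractable.

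First, since $f$ has standard $p$-growth, $u$ solves
\[
\int_\Omega \langle D_\xi f(x, Du), D\varphi \rangle\, dx = \int_\Omega g \cdot \varphi\, dx, \qquad \forall\, \varphi \in W^{1,p}_0(\Omega, \mathbb{R}^N).
\]
Fix $B_R \Subset \Omega$, radii $R/4 < s < t < t' < R$, a cut-off $\eta \in C^\infty_0(B_t)$ with $\eta \equiv 1$ on $B_s$, $0 \leq \eta \leq 1$ and $|D\eta| \leq c/(t-s)$, and increments $h$ with $|h| \leq (t'-t)/2$. I test with $\varphi = \tau_{-h}(\eta^2 \tau_h u)$, use the discrete integration-by-parts identity in Proposition \ref{rapportoincrementale}(ii) to move $\tau_h$ onto the coefficient, and split the resulting equality into the four pieces $A_1+A_2+A_3+A_4 = B$ exactly as in the proof of Theorem \ref{AppThm}, where $A_3$ carries the positive ellipticity furnished by \eqref{F2} and (after Lemma \ref{D1}) equals, up to constants, $\int \eta^2 |\tau_h V_p(Du)|^2\,dx$.

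Second, the growth \eqref{F3} with $p=q$ combined with Young's inequality and Lemma \ref{ldiff} applied to $|\tau_h u|$ gives
\[
|A_1| \leq \varepsilon \int_\Omega \eta^2 (1+|Du|^2+|Du(\cdot+h)|^2)^{\frac{p-2}{2}} |\tau_h Du|^2 dx + \frac{c_\varepsilon |h|^2}{(t-s)^2} \int_{B_{R}} (1+|Du|)^p dx.
\]
For $A_2$ and $A_4$ I invoke \eqref{F4} together with $\|k\|_{L^\infty(B_R)} < \infty$; Hölder's inequality, Lemma \ref{ldiff} and Young's inequality produce the bounds
\[
|A_2| \leq \frac{c\, |h|^{1+\gamma}}{t-s} \int_{B_R}(1+|Du|)^p\, dx,
\]
\[
|A_4| \leq \varepsilon \int_\Omega \eta^2 (1+|Du|^2+|Du(\cdot+h)|^2)^{\frac{p-2}{2}}|\tau_h Du|^2 dx + c_\varepsilon\, |h|^{2\gamma} \int_{B_R}(1+|Du|)^p dx.
\]
Finally, since $2 \leq m' \leq p$, the forcing term $B$ is treated by Hölder, the Leibniz rule for $\tau_h$ and Lemma \ref{ldiff}, yielding
\[
|B| \leq \varepsilon \int_\Omega \eta^2 (1+|Du|^2+|Du(\cdot+h)|^2)^{\frac{p-2}{2}} |\tau_h Du|^2 dx + \frac{c|h|^2}{t-s}\|g\|_{L^m(B_R)} \|Du\|_{L^p(B_R)}^{p-1} + c\,|h|^m\, \|g\|_{L^m(B_R)}^m.
\]

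Third, I choose $\varepsilon$ small, reabsorb the three ellipticity contributions in $A_3$, and observe that every remaining right-hand side term is bounded by $c\, |h|^{2\lambda}\, \Phi(R, s, t)$ with $\lambda = \min\{\gamma, m/2\}$ (using $|h| \leq 1$ and $2\lambda \leq m$), where $\Phi$ depends only on $\|Du\|_{L^p(B_R)}$, $\|k\|_{L^\infty(B_R)}$ and $\|g\|_{L^m(B_R)}$. The iteration Lemma \ref{lm2} applied on $s, t \in [R/4, R/2]$ then eliminates the $(t-s)$ factors and delivers
\[
\int_{B_{R/4}} |\tau_h V_p(Du)|^2\, dx \leq c\, |h|^{2\lambda},
\]
uniformly for $|h|$ sufficiently small, which is precisely the statement $V_p(Du) \in B^\lambda_{2,\infty,\text{loc}}(\Omega, \mathbb{R}^{N \times n})$. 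There is essentially no hard step here: the equality $p=q$ closes the argument without any higher-integrability input, and the only care required is to verify that the $g$-term enters with exponent $|h|^m \geq |h|^{2\lambda}$ and that $m' \leq p$ so that the term $\|\tau_h Du\|_{L^{m'}}$ generated by $B$ is absorbed by the ellipticity — both are routine.
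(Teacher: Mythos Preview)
Your proof is correct and follows essentially the same approach as the paper, which simply notes that the argument of Theorem \ref{AppThm} goes through verbatim once $p=q$ and $k\in L^\infty_{\text{loc}}$ make every right-hand side integral of order $\int(1+|Du|)^p\,dx$, so no a priori higher integrability is required. One small remark: the appeal to the iteration Lemma \ref{lm2} at the end is superfluous, since after reabsorption the right-hand side contains no term of the form $\theta\,\Phi(t)$ to iterate away; you may simply fix $s=R/4$, $t=R/2$, $t'=3R/4$ and read off the conclusion directly.
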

\begin{proof}
The proof goes exactly as the one of Theorem \ref{AppThm}, but now the stronger assumptions on the data, i.e.\ $p=q$ and $k \in L^\infty_{loc}(\Omega)$, allow us to derive the a priori estimate \eqref{StimaTeo1} for $V_p(Du)$ without assuming any higher integrability property on the gradient.
\end{proof}

For a given ball $B_R \Subset \Omega$, we consider the variational problems
\begin{equation}\label{Pfj}
    \inf \left\{  \int_{B_R} [f_j(x,Dv)-g \cdot v]  dx \ : \ v \in W^{1,p}_0(B_R, \mathbb{R}^N)+u \right\}, \quad j \in \mathbb{N},
\end{equation}
where $(f_j)$ is the sequence of functions defined in Lemma \ref{apprlem1}. The lower semicontinuity and the $p$-growth conditions of the integrand $f_j$ give the existence of a unique solution $u_j \in W^{1,p}_0(B_R, \mathbb{R}^N)+ u $ of the problem \eqref{Pfj}. 
\\
Now, fix a non-negative smooth kernel $\phi \in \mathcal{C}^\infty_0(B_1(0))$ such that $\int_{B_1(0)} \phi =1$ and consider the corresponding family of mollifiers $(\phi_\varepsilon)_{\varepsilon >0}$. We set
$$k_\varepsilon=k * \phi_\varepsilon$$
and
\begin{equation}
f_{j}^\varepsilon(x,\xi)= \int_{B_1(0)} \phi(y)  f_j(x+\varepsilon y, \xi) dy \label{f_j}
\end{equation}
on $B_R$, for every $\varepsilon < \text{dist}(B_R,\Omega)$. One can easily check that $f_j^\varepsilon$ satisfies assumptions \eqref{F1}--\eqref{F3} and the set of conditions:
\begin{equation}\label{F1*}
    1/L_1(|\xi|^p-1) \leq f_{j}^\varepsilon(x,\xi) \leq L_1(1+|\xi|^p), \tag{F1*}
\end{equation}
\begin{equation}\label{F2*}
    \langle D_\xi f_{j}^\varepsilon(x, \xi)-D_\xi f_{j}^\varepsilon(x, \eta),\xi-\eta\rangle  \ge \nu_1 (1 +|\xi|^2+|\eta|^2)^\frac{p-2}{2}|\xi - \eta|^2, \tag{F2*}
\end{equation}
\begin{equation}\label{F3*}
    |D_\xi f_{j}^\varepsilon(x, \xi)-D_\xi f_{j}^\varepsilon(x, \eta)| \le l(j) (1 +|\xi|^2+|\eta|^2)^\frac{p-2}{2}|\xi - \eta|. \tag{F3*}
\end{equation}
By virtue of assumption \eqref{F4}, we have that for a.e. $x,y \in B_R$ and every $\xi \in \mathbb{R}^{N \times n}$,
\begin{equation}\label{F4*}
    |D_\xi f_{j}^\varepsilon(x,\xi)-D_\xi f_{j}^\varepsilon(y, \xi)| \leq |x-y|^{\gamma} (k_\varepsilon(x)+k_\varepsilon(y)) (1 +|\xi|^2)^\frac{q-1}{2} , \tag{F4*}
\end{equation}
\begin{equation}\label{F4**}
    |D_\xi f_{j}^\varepsilon(x,\xi)-D_\xi f_{j}^\varepsilon(y, \xi)| \leq \tilde{c}(j) |x-y|^{\gamma} (k_\varepsilon(x)+k_\varepsilon(y)) (1 +|\xi|^2)^\frac{p-1}{2}. \tag{F4**}
\end{equation}
Let us define the boundary value problem in $B_R \Subset \Omega$
\begin{equation}\label{Pfj1}
    \inf \left\{  \int_{B_R} [f_j^\varepsilon(x,Dv)-g \cdot v]  dx \ : \ v \in W^{1,p}_0(B_R, \mathbb{R}^N)+u_j \right\}.
\end{equation}
By the strict convexity of $f_j^\varepsilon$, there exists a unique minimum $u_j^\varepsilon \in W^{1,p}_0(B_R, \mathbb{R}^N)+ u_j $ of the problem \eqref{Pfj1}. The $p$-growth from above in \eqref{F1*} implies that $u_j^\varepsilon$ satisfies the Euler-Lagrange system
\begin{equation}\label{EL}
    \int_{B_R} \langle D_\xi f_j^\varepsilon(x,Du_j^\varepsilon),D \varphi \rangle  dx = \int_{B_R} g \cdot \varphi  dx, \quad \text{for every } \varphi \in W^{1,p}_0(B_R, \mathbb{R}^N).
\end{equation}
 Exploiting the ellipticity assumption \eqref{F2*}, choosing $\varphi= u_j^\varepsilon -u_j$ as test function in \eqref{EL} and \eqref{Pfj}, we have
\begin{align}
    \nu_1 & \int_{B_R} (1+|Du_j|^2+|Du_{j}^\varepsilon|^2)^{\frac{p-2}{2}}
    |Du_{j}^\varepsilon-Du_j|^2 dx \notag\\
    & \le \int_{B_R} \langle D_\xi f_j^\varepsilon(x,Du_j^\varepsilon) -  D_\xi f_j^\varepsilon(x,Du_j),D (u_j^\varepsilon - u_j) \rangle  dx
 \notag\\
    & = \underbrace{ \int_{B_R} \langle D_\xi f_j^\varepsilon(x,Du_j^\varepsilon) ,D (u_j^\varepsilon - u_j) \rangle  dx}_{= \ \int_{B_R} g \cdot (u_j^\varepsilon -u_j)dx} \notag\\
    & \ \ \ \ - \int_{B_R} \langle D_\xi f_j^\varepsilon(x,Du_j),D (u_j^\varepsilon - u_j) \rangle  dx \notag\\
    & \ \ \ \ + \int_{B_R} \langle D_\xi f_j(x,Du_j),D (u_j^\varepsilon - u_j) \rangle  dx \notag\\
    & \ \ \ \ -\underbrace{ \int_{B_R} \langle D_\xi f_j(x,Du_j),D (u_j^\varepsilon - u_j) \rangle  dx}_{= \ \int_{B_R} g \cdot (u_j^\varepsilon -u_j)dx} \notag
\end{align}
and so
\begin{align*}
     & \int_{B_R} (1+|Du_j|^2+|Du_{j}^\varepsilon|^2)^{\frac{p-2}{2}}
    |Du_{j}^\varepsilon-Du_j|^2 dx \\
    & \le c \int_{B_R} \langle D_\xi f_j(x,Du_j) - D_\xi f_j^\varepsilon(x,Du_j),D (u_j^\varepsilon - u_j) \rangle  dx,
\end{align*}
for a constant $c:=c(\nu,p)$. Applying H\"older's inequality in the right hand side of the previous estimate, we infer
\begin{align}
   & \int_{B_R} 
    |Du_{j}^\varepsilon-Du_j|^p dx \notag\\
   & \le c \int_{B_R} (1+|Du_j|^2+|Du_{j}^\varepsilon|^2)^{\frac{p-2}{2}}
    |Du_{j}^\varepsilon-Du_j|^2 dx \notag\\
    & \le c\left( \int_{B_R} | D_\xi f_j(x,Du_j) - D_\xi f_j^\varepsilon(x,Du_j)|^\frac{p}{p-1} dx  \right)^\frac{p-1}{p} \left( \int_{B_R} |D (u_j^\varepsilon - u_j) |^p  dx \right)^\frac{1}{p}, \notag
\end{align}
where the first inequality follows from the fact that $p \ge 2$. 
Next, we divide both sides by $$ \left( \int_{B_R} |D (u_j^\varepsilon - u_j) |^p  dx \right)^\frac{1}{p}$$ thus getting
\begin{equation}\label{SCov}
   \int_{B_R} 
    |Du_{j}^\varepsilon-Du_j|^p dx \le c   \int_{B_R} | D_\xi f_j(x,Du_j) - D_\xi f_j^\varepsilon(x,Du_j)|^\frac{p}{p-1} dx  .
\end{equation}
Assumptions \eqref{F1*} and \eqref{F2*} yields
$$|D_\xi f_j^\varepsilon (x,Du_j)| \le c(j) (1+|Du_j|^{p-1}) \quad \text{a.e.\ in } B_R$$
with $c(j)$ independent of $\varepsilon$. Moreover, $|Du_j|^{p-1} \in L^{\frac{p}{p-1}}(B_R)$ and,   by the very definition of $f_j^\varepsilon$, we have $$D_\xi f_j^\varepsilon (x,Du_j) \to D_\xi f_j (x,Du_j) \quad \text{a.e.\ in } B_R \text{ \ \ \ as } \varepsilon \to 0^+$$ and so the Dominated convergence theorem gives that
$$\lim_{\varepsilon \to 0^+} \int_{B_R} | D_\xi f_j(x,Du_j) - D_\xi f_j^\varepsilon(x,Du_j)|^\frac{p}{p-1} dx =0. $$
Therefore, passing to the limit as $\varepsilon \to 0^+$ in \eqref{SCov}, 
\begin{equation}\label{strongconv}
    u_j^\varepsilon \to u_j \text{ \ strongly in \ } W^{1,p}(B_R, \mathbb{R}^N).
\end{equation} 
By the regularity Theorem \ref{ReThm}, we have that $V_p(Du_j^\varepsilon)  \in B^\lambda_{2,\infty,{{loc}}}(B_R, \mathbb{R}^{N \times n})$. Then, $u_j^\varepsilon$ satisfies the a priori estimate
\begin{align}
    \int_{B_\rho} |Du_j^\varepsilon|^\frac{np}{n-2 \beta} dx \le C \left( \int_{B_R}(1+|Du_j^\varepsilon|^p) dx+ \Vert k_\varepsilon \Vert_{L^r(B_R)}+ \Vert g \Vert_{L^{m}(B_R)}\right)^\sigma \label{APS}
\end{align}
for every $\beta < \lambda$ and all concentric balls $B_\rho \Subset B_R$, with constants $C:=C(n,p,q,m,\nu,L,l,R,\rho)$ and $\sigma:=\sigma(n,p,q,m,\gamma)$, both independent of $j$ and $\varepsilon$. Moreover, the right-hand side of inequality \eqref{APS} is uniformly bounded with respect to $\varepsilon$ by virtue of \eqref{strongconv} and the strong convergence of $k_\varepsilon$ to $k$ in $L^r_{loc}(B_R)$. Hence, by the weak lower semicontinuity, \eqref{strongconv} and the strong convergence of $k_\varepsilon$ to $k$ in $L^r_{loc}(B_R)$, we obtain
\begin{align}
    \int_{B_\rho} |Du_j|^\frac{np}{n-2 \beta} dx \le & \liminf_{\varepsilon}  \int_{B_\rho} |Du_j^\varepsilon|^\frac{np}{n-2 \beta} dx \notag\\
    \le & \ C \left( \int_{B_R}(1+|Du_j|^p) dx+ \Vert k \Vert_{L^r(B_R)}+ \Vert g \Vert_{L^{m}(B_R)} \right)^\sigma. \label{apriori}
\end{align}
Now, by the growth assumption at (iv) of Lemma \ref{apprlem1}, the minimality of $u_j$ and using $u$ as test function, we get
\begin{align}
    \int_{B_R} (|Du_j|^p-1)dx \le & \
    L_1 \int_{B_R} f_j(x,Du_j)dx \notag \\
     \le  & \ L_1 \int_{B_R} [f_j(x,Du_j)-g \cdot u_j+ g \cdot u_j]dx \notag\\
    \le & \ L_1 \int_{B_R} [f_j(x,Du)-g \cdot u]dx +L_1 \int_{B_R} g \cdot u_jdx \notag\\
    \le & \ L_1  \int_{B_R} [f(x,Du)-g \cdot u]dx + L_1\int_{B_R} g \cdot u_jdx, \label{est}
\end{align}
where in the last inequality we used the monotonicity of $(f_j)$. From Sobolev-Poicaré inequality, we derive that
\begin{align}
    \Vert u_j -u\Vert_{L^p(B_R)} \leq c \Vert Du_j - Du \Vert_{L^p(B_R)}, \notag
\end{align}
that implies 
\begin{align}
  \Vert u_j \Vert_{L^p(B_R)}    \le c(n,p,R) \left(  \Vert Du_j \Vert_{L^p(B_R)}+\Vert u \Vert_{W^{1,p}(B_R)}\right) \notag
\end{align}
and so the second integral in the right hand side of \eqref{est} can be estimated as follows
\begin{align}\label{piccione}
    \int_{B_R} g \cdot u_jdx \le & \ \Vert g \Vert_{L^{p'}(B_R)} \Vert u_j \Vert_{L^p(B_R)} \notag\\
    \le & \ c(n,p,R) \Vert g \Vert_{L^{p'}(B_R)} \left(  \Vert Du_j \Vert_{L^p(B_R)}+\Vert u \Vert_{W^{1,p}(B_R)}\right) \notag\\
    \le & \ \dfrac{1}{2L_1} \Vert Du_j \Vert_{L^p(B_R)}^p+ c(n,p,L_1,R) \left( \Vert g \Vert_{L^{p'}(B_R)}^{p'}+ \Vert u \Vert_{W^{1,p}(B_R)}^p  \right), 
\end{align}
where we also used H\"older's and Young's inequalities. Putting the previous estimate in \eqref{est} and reabsorbing the term $\int_{B_R} |Du_j|^p dx$ from the right hand side by the left hand side, we get
\begin{equation}
    \int_{B_R} |Du_j|^pdx \le c  \int_{B_R} [f(x,Du)-g \cdot u]dx + c  \left( 1+\Vert g \Vert_{L^{p'}(B_R)}^{p'}+ \Vert u \Vert_{W^{1,p}(B_R)}^p  \right), \label{stima}
\end{equation}
for a positive constant $c$ independent of $j$.
Therefore, up to subsequences, $(u_j)$ converges weakly in $W^{1,p}(B_R, \mathbb{R}^N)$ as $j \to \infty$ to a function $v \in W^{1,p}_0(B_R, \mathbb{R}^N)+u$ and by estimates \eqref{apriori} and \eqref{stima} we have
\begin{align}
    \int_{B_\rho} |Dv|^\frac{np}{n-2 \beta} dx \le & \liminf_{j}  \int_{B_\rho} |Du_j|^\frac{np}{n-2 \beta} dx \notag\\
    \le & \ C \left( \int_{B_R}(1+f(x,Du)-g \cdot u) dx+ \Vert u \Vert_{W^{1,p}(B_R)}+ \Vert k \Vert_{L^r(B_R)}+ \Vert g \Vert_{L^{m}(B_R)} \right)^\sigma.
\end{align}
The convergence of $(f_j)$ to $f$, the weak lower semicontinuity of $f_j$ and the minimality of $u_j$ give that
\begin{align*}
    \int_{B_R} [f(x,Dv)-g \cdot v]dx \le & \liminf_{j_0} \int_{B_R} [f_{j_0}(x,Dv)-g \cdot v]dx \notag\\
    \le & \liminf_{j_0} \liminf_j\int_{B_R} [f_{j_0}(x,Du_j)-g \cdot u_j]dx \notag\\
    \le & \liminf_j\int_{B_R} [f_{j}(x,Du_j)-g \cdot u_j]dx \notag\\
    \le & \liminf_j\int_{B_R} [f_{j}(x,Du)-g \cdot u]dx \notag\\
    \le & \int_{B_R} [f(x,Du)-g \cdot u]dx,
\end{align*}
which implies $v=u$ a.e.\ in $B_R$ for the strict convexity of $f$. Then, $Du \in L^\frac{np}{n-2 \beta}_{loc}(B_R, \mathbb{R}^{N \times n})$ and the estimate \eqref{StimaTeo1} holds.
\section{Widely degenerate case}\label{WDSec}
In this section, we give the proof of Theorem \ref{WDT}. We argue as in \cite{Cupini1,Cupini2}.

\begin{proof}[Proof of Theorem \ref{WDT}] Let us consider the integrands
$$ f_j (x, \xi)= f(x, \xi) + \frac{1}{j}(1+ \lvert \xi \rvert^p), \qquad \forall j \in \mathbb{N}.$$
For a given ball $B_R \Subset \Omega$, we consider the variational problems
\begin{equation}\label{Wdfj}
    \inf \left\{  \int_{B_R} [f_j(x,Dv) +  \text{arctan}(|v-u|^2)  -g \cdot v]  dx \ : \ v \in W^{1,p}_0(B_R, \mathbb{R}^N)+u \right\},
\end{equation}
The lower semicontinuity and the $p$-growth conditions of the integrand $f_j$ give the existence of a unique solution $u_j \in W^{1,p}_0(B_R, \mathbb{R}^N)+ u $ of the problem \eqref{Wdfj}. It is easy to show that $f_j$ satisfies \eqref{F1}--\eqref{F4} for every $ j \in \mathbb{N}$. In presence of the perturbation term $\int_{B_R} \text{arctan}(|v-u|^2)  dx $, the proof of Theorem \ref{mainthm} can be easily adapted because of the boundedness of its integral and of its derivative with respect to the variable $v$. 
Hence we have $ V_p(Du_j) \in B^{\lambda}_{2, \infty, loc}(B_R)$ and 
\begin{align}\label{720}
    \left(  \int_{B_{R/4}}|Du_j|^\frac{np}{n-2 \beta} dx \right)^\frac{n-2 \beta}{n} & \leq c \left( \int_{B_{R}} (1 + |Du_j|^p) dx + \Vert k \Vert_{L^r (B_{R})} + \Vert g \Vert_{L^{m}(B_R)}  \right)^\sigma \qquad 0<\beta < \lambda
\end{align}
where $c$ and $\sigma$ are positive constants independent of $j$.\\
We observe that from the definition of $f_j$ and the assumption \eqref{H1}, it holds
\begin{align}\label{xi*}
\lvert  \xi \rvert^p \leq c(1 + f_j(x, \xi)), \qquad \forall \xi \in \mathbb{R}^{N \times n}.
\end{align}
By the minimality of $u_j$ and using $u$ as test function, we infer
\begin{align}\label{signore}
    \int_{B_R} \left[ f_j (x, D u_j) +\text{arctan}(|u_j-u|^2)  - g \cdot u_j \right] dx \leq  \int_{B_R} \left[f_j (x, D u) - g \cdot u \right]  dx,
\end{align}
so  using \eqref{xi*} in the left-hand side of \eqref{signore} we get
\begin{align}\label{liliana}
    \int_{B_R} \lvert D u_j\rvert^p dx &\leq c \int_{B_R} (1 + f_j (x, D u_j)) dx  \notag \\
    & \leq  c\int_{B_R} \left[ 1+ f_j (x, D u_j)  + \text{arctan}(|u_j-u|^2)  - g \cdot u_j + g \cdot u_j\right] dx \notag \\
     & \leq  c\int_{B_R} \left[ 1+ f_j (x, D u) - g \cdot u + g \cdot u_j\right] dx \notag \\
     & \leq  c\int_{B_R} \left[ 1+ f (x, D u)+ \lvert Du\rvert^p - g \cdot u + g \cdot u_j\right] dx. 
\end{align}
We recall the estimate \eqref{piccione}
\begin{align}
    \int_{B_R} g \cdot u_jdx \le \ \dfrac{1}{2} \Vert Du_j \Vert_{L^p(B_R)}^p+ c(n,p,R) \left( \Vert g \Vert_{L^{p'}(B_R)}^{p'}+ \Vert u \Vert_{W^{1,p}(B_R)}^p  \right). \notag
\end{align}
Inserting the previous estimate in \eqref{liliana} and reabsorbing the term $\Vert Du_j \Vert_{L^p(B_R)}^p$ from the right hand side in the left hand side, we obtain
\begin{align}\label{lieve}
    \int_{B_R} \lvert D u_j\rvert^p dx &\leq   c\int_{B_R} \left[ 1+ f (x, D u)+ \lvert Du\rvert^p - g \cdot u + \lvert g \rvert^{p'}+ \lvert u \rvert^{p} \right] dx \notag\\
    & \le c\int_{B_R} \left[ 1+ f (x, D u)+ \lvert Du\rvert^p + \lvert g \rvert^{p'}+ \lvert u \rvert^{p} \right] dx,  
\end{align}
where in the last line we used Young's inequality. 
Hence $(Du_j)_j$ is bounded in $L^p(B_R)$ and so there exists a function $v \in W^{1,p}(B_R)$ such that $ u_j  \rightharpoonup v$ weakly.\\
Inserting \eqref{lieve} in \eqref{720}
\begin{align}
    \left(  \int_{B_{R/4}}|Du_j|^\frac{np}{n-2 \beta} dx \right)^\frac{n-2 \beta}{n} & \leq c \left( \int_{B_{R}} (1 +f (x, D u)) dx + \Vert k \Vert_{L^r (B_{R})} + \Vert g \Vert_{L^{m}(B_R)}  +\Vert u \Vert_{W^{1,p}(B_R)}^p\right)^\sigma. \notag
\end{align}
From weak lower semicontinuity, passing to the limit as $j \to + \infty$  we derive
\begin{align}
\left(  \int_{B_{R/4}}|Dv|^\frac{np}{n-2 \beta} dx \right)^\frac{n-2 \beta}{n} &\leq  \liminf_j \left(  \int_{B_{R/4}}|Du_j|^\frac{np}{n-2 \beta} dx \right)^\frac{n-2 \beta}{n} \notag \\
   & \leq c \left( \int_{B_{R}} (1 +f (x, D u)) dx + \Vert k \Vert_{L^r (B_{R})} + \Vert g \Vert_{L^{m}(B_R)}  +\Vert u \Vert_{W^{1,p}(B_R)}^p\right)^\sigma. \label{gigi}
\end{align}
On the other hand $u_j$ also satisfies \eqref{StimaTeo1} and since estimate \eqref{lieve} holds, we have
\begin{align}
   \int_{B_{R/4}} |\tau_h V_p (Du_j)|^2 dx \leq  c |h|^{2 \lambda}  \left( \int_{B_{R}} (1 +f (x, D u))dx + \Vert k \Vert_{L^r (B_{R})} + \Vert g \Vert_{L^{m}(B_R)}  +\Vert u \Vert_{W^{1,p}(B_R)}^p\right)^\sigma. \notag
\end{align}
The sequence $(V_p (Du_j))_j$ is bounded in $B_{2, \infty}^{\lambda}(B_{R/4})$ and therefore is bounded in $B_{2, \infty,loc}^{\lambda}(B_{R})$ thanks to a standard covering argument. {Besov spaces can be seen as interpolation spaces between $L^p$ and $W^{1,p}$ (see \cite[Chapter 6]{bergh}), so applying classical interpolation theory (\cite[Theorem 3.8.1]{bergh})}, there exists a function $w$ such that $$V_p (Du_j) \rightharpoonup w \qquad \text{ weakly in } 
 B_{2, \infty,loc}^{\lambda}(B_{R}).$$ Therefore $V_p (Du_j)$ converges  strongly to $w$ in $L^2_{loc}(B_R)$ and recalling that $$|V_p (Du_j)|^2 = |Du_j|^p$$ and 
 $$Du_j\rightharpoonup Dv \qquad \text{ weakly in } 
 L^p(B_R),$$ we get, by uniqueness of weak limit that  $w= V_p(Dv).$
 Using weak lower semicontinuity we infer
 \begin{align}
   \int_{B_{R/4}} |\tau_h V_p (Dv)|^2 dx &\leq \liminf_j \int_{B_{R/4}} |\tau_h V_p (Du_j)|^2 dx   \notag \\
   &\leq c |h|^{2 \lambda}  \Big( \int_{B_{R}} (1 +f (x, D u)) dx + \Vert k \Vert_{L^r (B_{R})}+ \Vert g \Vert_{L^{m}(B_R)}  +\Vert u \Vert_{W^{1,p}(B_R)}^p\Big)^\sigma, \notag
\end{align}
The convergence of $(f_j)$ to $f$, the weak lower semicontinuity of $f_j$ and the minimality of $u_j$ give that
\begin{align*}
 \int_{B_R}  [f(x,Dv) -g \cdot v]dx    \leq &\int_{B_R}  [f(x,Dv) + \text{arctan}(|v-u|^2)  -g \cdot v]dx \\
 \le & \liminf_{j_0} \int_{B_R} [f_{j_0}(x,Dv) + \text{arctan}(|v-u|^2)  -g \cdot v]dx \notag\\
 \le & \liminf_{j_0} \liminf_j \int_{B_R} [f_{j_0}(x,Du_j) + \text{arctan}(|u_j-u|^2)  -g \cdot u_j]dx \notag\\
 \le &  \liminf_j \int_{B_R} [f_{j}(x,Du_j) + \text{arctan}(|u_j-u|^2)  -g \cdot u_j]dx \notag\\
  \le &  \liminf_j \int_{B_R} [f_{j}(x,Du) -g \cdot u]dx \notag\\
    \le & \int_{B_R} [f(x,Du)-g \cdot u]dx,
\end{align*}
then $v \in W^{1,p}_0(B_R, \mathbb{R}^N)+u$ is a local minimizer of \eqref{functional}.\\
Now we show that $v=u$ almost everywhere in $B_R$. Using the convergence $f_j \to f$, the weak lower semicontinuity of  $f_j$ and the minimality of $u_j$, we derive
\begin{align*}
 \int_{B_R}  [f(x,Dv) -g \cdot v]dx    \leq &\int_{B_R}  [f(x,Dv) + \text{arctan}(|v-u|^2)  -g \cdot v]dx \\
 \le & \liminf_{j_0} \int_{B_R} [f_{j_0}(x,Dv) + \text{arctan}(|v-u|^2)  -g \cdot v]dx \notag\\
 \leq &  \liminf_{j_0} \ \liminf_{j}\int_{B_R} [f_{j_0}(x,Du_j) + \text{arctan}(|u_j-u|^2)  -g \cdot u_j]dx \notag\\
 \leq &  \liminf_{j}\int_{B_R} [f_j(x,Du_j) + \text{arctan}(|u_j-u|^2)  -g \cdot u_j]dx \notag\\
  \leq &  \liminf_{j}\int_{B_R} [f_j(x,Du)    -g \cdot u]dx \notag\\
    \le & \int_{B_R} [f(x,Du)-g \cdot u]dx =  \int_{B_R}  [f(x,Dv) -g \cdot v]dx.
\end{align*}
Since $f$ is nonnegative the previous equality implies that
\begin{equation*}
    \int_{B_R}  \text{arctan}(|v-u|^2) dx = 0
\end{equation*}
and so $u=v$ a.e. in $B_R.$
\end{proof}

\vskip0.5cm

\noindent {{\bf Acknowledgements.} The authors are members of the Gruppo Nazionale per l’Analisi Matematica,
la Probabilità e le loro Applicazioni (GNAMPA) of the Istituto Nazionale di Alta Matematica (INdAM). A.G. Grimaldi has been partially supported through the INdAM$-$GNAMPA 2024 Project “Interazione ottimale tra la regolarità dei coefficienti e l’anisotropia del problema in funzionali integrali a crescite non standard” (CUP: E53C23001670001). In addition, A.G. Grimaldi has also been supported through the project: Geometric-Analytic Methods for PDEs and Applications (GAMPA) - funded by European Union - Next Generation EU within the PRIN 2022 program (D.D. 104 - 02/02/2022 Ministero dell'Università e della Ricerca). This manuscript reflects only the authors' views and opinions and the Ministry cannot be considered responsible for them.

\end{document}